\title{New Expander Bounds from Affine Group Energy}
\newcommand{\F}{\mathbb{F}}
\newcommand{\Z}{\mathbb{Z}}
\newcommand{\R}{\mathbb{R}}
\newcommand{\cL}{\mathcal{L}}
\newcommand{\cP}{\mathcal{P}}
\author{Oliver Roche-Newton and Audie Warren}
\newtheorem{Theorem}{Theorem}
\newtheorem{Corollary}[Theorem]{Corollary}
\newtheorem{Lemma}[Theorem]{Lemma}
\newtheorem{Conjecture}[Theorem]{Conjecture}
\newtheorem*{Theorem*}{Theorem \ref{pencilstheorem}}
\newtheorem*{Theorem**}{Theorem \ref{thm:FSMP}}
\begin{document}
 \maketitle
 
 \begin{abstract}
    The purpose of this article is to further explore how the structure of the affine group can be used to deduce new incidence theorems, and to explore sum-product type applications of these incidence bounds, building on the recent work of Rudnev and Shkredov \cite{RuSh}.
    
    We bound the energy of several systems of lines, in some cases obtaining a better energy bound than the corresponding bounds in \cite{RuSh} by exploiting a connection with collinear quadruples.
    
    Our motivation for seeking to generalise and improve the incidence bound from \cite{RuSh} comes from possible applications to sum-product problems. For example, we prove that, for any finite $A \subset \mathbb R$ the following superquadratic bound holds:
    \[
    \left| \left \{ \frac{ab-cd}{a-c} : a,b,c,d \in A \right \} \right| \gg |A|^{2+\frac{1}{14}}.
    \]
    This improves on a bound with exponent $2$ that was given in \cite{MRNS}. We also give a threshold-beating asymmetric sum-product estimate for sets with small sum set by proving that there exists a positive constant $c$ such that for all finite $A,B \subset \mathbb R$,
    \[
    |A+A| \ll K|A| \Rightarrow |AB| \gg_K |A||B|^{1/2+c}.
    \]
    
\end{abstract}

 \section{Introduction}\label{intro}

\setlength{\parindent}{2.5em} \setlength{\parskip}{0.7em}
The central topic of this paper is that of point-line incidence bounds in $\mathbb R^2$. Given a set $\cP \subset \mathbb R^2$ and a set $\cL$ of lines in $\mathbb R^2$, the number of incidences between $\cP$ and $\cL$ is defined as
\[
I(\cP, \cL):= |\{(p,l) \in \cP \times \cL : p \in l \}|.
\]
The much celebrated Szemer\'{e}di-Trotter Theorem gives the following\footnote{Here and throughout the paper, the standard notation
$\ll,\gg$ and respectively $O$ and $\Omega$ is applied to positive quantities in the usual way. That is, $X\gg Y$, $Y \ll X,$ $X=\Omega(Y)$ and $Y=O(X)$ all mean that $X\geq cY$, for some absolute constant $c>0$. If both $X \ll Y$ and $Y \ll X$ hold we write $X \approx Y$, or equivalently $X= \Theta(Y)$.}
optimal bound for this quantity:
\begin{equation} \label{SzTr}
I(\cP, \cL) \ll |\cP|^{2/3}|\cL|^{2/3}+|\cP|+|\cL|.
\end{equation}
This result has numerous applications, and the applications most relevant to this paper concern the sum-product problem. In such applications, we are often interested in the case when $\cP=A \times B$ is a Cartesian product, or the line set is defined by a Cartesian product. This connection was first discovered in a beautiful paper of Elekes \cite{E}. See also \cite[Chapter 8]{TV} for introductory material on this topic.

In \cite{RuSh}, the authors prove an incidence theorem for Cartesian products of points $A\times B$ and arbitrary finite sets of non vertical lines $\cL$, in terms of the `energy' of $\cL$. The energy of a set of lines originates from considering the lines as element of the affine group $\text{Aff}(\R)$, via
$$y = mx + c  \longleftrightarrow (m,c) \in \text{Aff}(\R).$$
We can therefore talk interchangeably about (non-vertical, non-horizontal) lines $l$ in the plane and their realisation in the affine group. Multiplication in the affine group is analogous to line composition in the plane, via
$$(a,b) \cdot (c,d) = (ac, ad+b).$$
Let $r_{\cL^{-1} \cL}(l)$ denote the number of pairs $(l_1,l_2) \in \cL \times \cL$ such that $l_1^{-1} l_2 =l $. The energy of $\cL$ is then defined as
$$E(\cL) := \sum_{l \in \text{Aff}(\R)}r_{\cL^{-1} \cL} (l)^2. $$
This energy can be expanded to be considered as the number of solutions to
$$l_1^{-1}l_2 = l_3^{-1}l_4$$
such that each $l_i$ is in $\cL$.


In the Euclidean setting, the fundamental new incidence theorem of \cite{RuSh} was the following:

 \begin{Theorem}{\cite[Theorem 8]{RuSh}}\label{mishailya8}
 Let $A, B \subseteq \R$ be finite sets, and let $\cL$ be a finite set of non-vertical and non-horizontal lines. Then we have
 $$I(A \times B, \cL) \ll |B|^{1/2}|A|^{2/3}E(\cL)^{1/6}|\cL|^{1/3} + |B|^{1/2}|\cL|.$$
  \end{Theorem}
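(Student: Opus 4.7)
For each $b \in B$ set $r(b) := |\{l \in \cL : (A \times \{b\}) \cap l \neq \emptyset\}|$. Since no line in $\cL$ is horizontal, each line meets the row $y = b$ in at most one point, so $I := I(A \times B, \cL) = \sum_{b \in B} r(b)$, and Cauchy-Schwarz gives $I^2 \leq |B| \sum_b r(b)^2$. The task is reduced to bounding this second moment.

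Writing $S_l := m_l A + c_l$ for the image of $A$ under the affine map $\phi_l$, we have $r(b) = |\{l \in \cL : b \in S_l\}|$, hence
\[
\sum_{b \in B} r(b)^2 \;=\; \sum_{l_1, l_2 \in \cL} |S_{l_1} \cap S_{l_2} \cap B| \;\leq\; \sum_{l_1, l_2 \in \cL} |S_{l_1} \cap S_{l_2}|.
\]
A direct computation identifies $|S_{l_1} \cap S_{l_2}|$ with the number of $(a_1,a_2) \in A \times A$ satisfying $m_{l_1} a_1 + c_{l_1} = m_{l_2} a_2 + c_{l_2}$; these are exactly the points of $A \times A$ on the line that, in the affine-group parameterisation, is the element $g = l_2^{-1} l_1 \in \cL^{-1}\cL$. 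Grouping pairs $(l_1,l_2)$ by the value of $g$ yields
\[
\sum_b r(b)^2 \;\leq\; T := \sum_{g \in \cL^{-1}\cL} r_{\cL^{-1}\cL}(g) \cdot |g \cap (A \times A)|.
\]

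The heart of the proof is to show $T \ll |A|^{4/3}\, E(\cL)^{1/3}\, |\cL|^{2/3}$. My plan is to do this via a double dyadic decomposition, grouping the $g$'s into cells $\mathcal{G}_{j,k} := \{g : |g \cap (A \times A)| \sim 2^j,\ r_{\cL^{-1}\cL}(g) \sim 2^k\}$. Three upper bounds on $|\mathcal{G}_{j,k}|$ are in play: Szemer\'edi-Trotter $(\ref{SzTr})$ applied to $A \times A$ and the lines of richness at least $2^j$ gives $|\mathcal{G}_{j,k}| \ll |A|^4/2^{3j}$ in the leading regime; the definition of $E(\cL)$ gives $|\mathcal{G}_{j,k}| \leq E(\cL)/4^k$; and the identity $\sum_g r_{\cL^{-1}\cL}(g) = |\cL|^2$ gives $|\mathcal{G}_{j,k}| \leq |\cL|^2/2^k$. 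Taking the geometric mean of these three bounds with equal weights $(1/3,1/3,1/3)$ produces
\[
|\mathcal{G}_{j,k}| \;\ll\; \frac{|A|^{4/3}\, E(\cL)^{1/3}\, |\cL|^{2/3}}{2^{j+k}},
\]
so $2^{j+k}|\mathcal{G}_{j,k}|$ is independent of $(j,k)$ and the dyadic double sum collapses to the target bound on $T$ (up to a logarithmic loss that a single-scale popularity argument would remove). Substituting into $I^2 \leq |B| T$ then recovers the leading term of Theorem~\ref{mishailya8}; the secondary $|B|^{1/2}|\cL|$ term would come from retaining the additive contributions ($|A|^2$ and $|\mathcal{G}_{j,k}|$) in the Szemer\'edi-Trotter bound.

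The main obstacle I anticipate is the three-way interpolation step: the weights $(1/3, 1/3, 1/3)$ are forced by the requirement that the $2^j$ and $2^k$ factors inside $T$ cancel simultaneously, and this in turn is what dictates the final exponents $(2/3, 1/6, 1/3)$ on $|A|$, $E(\cL)$ and $|\cL|$. A second subtlety is choosing the correct direction for the opening Cauchy-Schwarz: summing over $b \in B$ (rather than $a \in A$) introduces $|B|^{1/2}$ in front and converts the second-moment computation into an incidence question in $A \times A$, which is precisely what produces the asymmetric $|B|^{1/2}|A|^{2/3}$ split appearing in the statement.
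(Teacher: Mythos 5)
The paper does not actually prove Theorem~\ref{mishailya8} --- it is imported from \cite[Theorem 8]{RuSh} --- but your outline is essentially the argument given there and it is correct: Cauchy--Schwarz over the rows $y=b$, the identification of $\sum_b r(b)^2$ with $\sum_g r_{\cL^{-1}\cL}(g)\,|g\cap(A\times A)|$ via $g=l_2^{-1}l_1$, and an interpolation between the Szemer\'edi--Trotter rich-line bound, $E(\cL)$ and $|\cL|^2$. Two small points to tighten when writing it up: the logarithm is removed not by a popularity argument but by summing the \emph{minimum} of the three bounds (for fixed $k$ it decays geometrically in $j$ on one side of the crossover and grows geometrically on the other, and likewise in $k$), and the $|B|^{1/2}|\cL|$ term really comes from the cells with $|g\cap(A\times A)|\le 1$, where the rich-line bound is unavailable and the contribution to $T$ is at most $\sum_g r_{\cL^{-1}\cL}(g)=|\cL|^2$, rather than from the secondary Szemer\'edi--Trotter term $|A|^2/2^j$, which is always dominated because $|g\cap(A\times A)|\le |A|=|A\times A|^{1/2}$ for the non-vertical, non-horizontal lines $g\in\cL^{-1}\cL$.
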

  
  If one can obtain strong bounds for $E(\cL)$ for certain families $\cL$ of lines, Theorem \ref{mishailya8} can be used to give interesting and applicable incidence theorems. In \cite{RuSh}, the authors prove a non-trivial upper bound 
  \begin{equation} \label{RSEnergy}
      E(\cL) \ll |C|^{5/2}|D|^3+ |C|^3|D|^2
  \end{equation}
  when $\cL$ is of the form $\{(c,d): c \in C, d \in D\}$ or $\{(c,cd): c \in C, d \in D\}$ for any finite sets $C \subset \R^*$ and $D \subset \R$. This resulted in the bound
  \begin{equation} \label{RSgrids}
  I(A \times B, \cL) \ll |A|^{2/3}|B|^{1/2}|C|^{3/4}|D|^{5/6}+|B|^{1/2}|C||D|.
  \end{equation}
  This result improves on \eqref{SzTr} in the case when the sizes of the sets $A,B,C$ and $D$ are suitably imbalanced, which gives the potential for interesting new applications, the first of which were explored in \cite{RuSh}.
  
  In this paper we prove bounds on the energy of two other families of line sets, and give an application for each.  We also generalise Theorem \ref{mishailya8} to apply to skew grids. Our main energy theorems are as follows.

\begin{Theorem}\label{energythm1} Let $C,D \subset \mathbb R$ be finite sets, and let $\cL \subset \text{Aff}(\R)$ be the lines of the form $\left(\frac{\lambda}{c-d}, \frac{\mu c}{c-d}\right)$ with $c \in C$, $d \in D$, $c \neq d$, and $\lambda, \mu \in \R \setminus \{0\}$. 
Then we have
$$E(\cL) \ll |C|^{5/2}|D|^{5/2} + |C|^4  + |D|^4.$$\end{Theorem}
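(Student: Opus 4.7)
The plan is to reduce $E(\cL)$ to a weighted count of collinearity incidences on the Cartesian products $C \times C$ and $D \times D$, and then bound it via Cauchy-Schwarz together with the Szemer\'edi-Trotter theorem. I would begin with the geometric observation that $l_{c,d}$ always passes through the point $(-\mu c/\lambda, 0)$ on the $x$-axis (a quantity depending only on $c$), so that $\cL$ organises into $|C|$ pencils through points on the $x$-axis. The main preliminary calculation is to compute $l_1^{-1} l_2$ in the affine group: writing $l_i = (\lambda/t_i, \mu c_i/t_i)$ with $t_i := c_i - d_i$ and using the inversion rule $(a,b)^{-1}(a',b') = (a'/a,\ (b'-b)/a)$, one obtains
\[
l_1^{-1} l_2 = \Bigl( \tfrac{t_1}{t_2},\ \tfrac{\mu(v c_2 - c_1)}{\lambda} \Bigr), \qquad v := t_1/t_2.
\]

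Setting $l_1^{-1} l_2 = l_3^{-1} l_4$ yields the two scalar conditions $v = t_1/t_2 = t_3/t_4$ and $v c_2 - c_1 = v c_4 - c_3 =: -w$; using $t_i = c_i - d_i$ the first identity also forces $v d_2 - d_1 = v d_4 - d_3 = -w$. Hence the energy condition is equivalent to the existence of a pair $(v,w) \in (\R \setminus \{0\}) \times \R$ such that all four points $(c_2, c_1), (c_4, c_3) \in C \times C$ and $(d_2, d_1), (d_4, d_3) \in D \times D$ lie on the single line $\ell : y = vx + w$. Summing over lines, this recasts the energy as
\[
E(\cL) \le \sum_{\ell} |\ell \cap (C \times C)|^2 \cdot |\ell \cap (D \times D)|^2 = \sum_\ell k_\ell^2 m_\ell^2,
\]
the sum being over non-horizontal lines in $\R^2$, with $k_\ell$ and $m_\ell$ the respective intersection counts. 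I expect this reformulation, which converts an algebraic affine-group energy into a classical collinear-quadruples count on two Cartesian products, to be the main conceptual step.

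To bound $\sum_\ell k_\ell^2 m_\ell^2$, I would split according to whether $k_\ell$ and $m_\ell$ are at least $2$. In the main regime $k_\ell, m_\ell \ge 2$, Cauchy-Schwarz gives
\[
\sum_{k_\ell, m_\ell \ge 2} k_\ell^2 m_\ell^2 \le \Bigl( \sum_{k_\ell \ge 2} k_\ell^4 \Bigr)^{1/2} \Bigl( \sum_{m_\ell \ge 2} m_\ell^4 \Bigr)^{1/2},
\]
and a standard dyadic Szemer\'edi-Trotter estimate applied to $C \times C$ (a $|C|^2$-point set with at most $|C|$ collinear) gives $\sum_{k_\ell \ge 2} k_\ell^4 \ll |C|^5$ and symmetrically $\sum_{m_\ell \ge 2} m_\ell^4 \ll |D|^5$, producing the dominant contribution $|C|^{5/2}|D|^{5/2}$. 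When $k_\ell = 1$ and $m_\ell \ge 2$, the contribution is at most $\sum_\ell m_\ell(m_\ell - 1) \le |D|^4$, since each ordered pair of distinct points in $D \times D$ lies on a unique line; the symmetric case yields $O(|C|^4)$. The remaining $k_\ell = m_\ell = 1$ contribution is at most $|C|^2 |D|^2$, which is absorbed into $|C|^4 + |D|^4$ by AM-GM. Summing the cases gives $E(\cL) \ll |C|^{5/2}|D|^{5/2} + |C|^4 + |D|^4$, as claimed.
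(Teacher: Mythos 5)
Your proposal is correct and follows essentially the same route as the paper: the explicit computation of $l_1^{-1}l_2$ reduces the energy to counting collinear quadruples with two points in $C\times C$ and two in $D\times D$, after which Cauchy--Schwarz splits the main term into fourth-moment line counts bounded by a dyadic Szemer\'edi--Trotter argument, with the degenerate cases $k_\ell=1$ or $m_\ell=1$ absorbed into $|C|^4+|D|^4$ exactly as in the paper. The only differences are cosmetic (the paper states the collinearity via matching slopes and $y$-intercepts rather than via the pair $(v,w)$).
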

\begin{Theorem}\label{energythm2}
 Let $C,D \subseteq \R^*$ be finite sets, and let $\cL \subset \text{Aff}(\R)$ be the lines of the form $( d(c-\lambda) - \mu, c)$ with $c \in C$, $d \in D$, $\lambda, \mu \in \R$ such that $d(c-\lambda) - \mu \neq 0$ . Then we have
$$E(\cL) \ll|C|^3 |D|^{5/2} +  |C|^2 |D|^3.$$
\end{Theorem}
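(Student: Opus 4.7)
The proposal closely parallels the strategy of Rudnev and Shkredov behind the bound \eqref{RSEnergy}, adapted to the present parametrisation in which the intercept (rather than the slope) is the coordinate drawn directly from $C$. Since $E(\cL)$ is invariant under left multiplication in $\text{Aff}(\R)$, we may translate $C$ and assume $\lambda = 0$, reducing to $\cL = \{(dc - \mu, c) : c \in C, d \in D, dc \neq \mu\}$. Writing $l_i = (a_i, c_i)$ with $a_i = d_i c_i - \mu$, the identity $l_1^{-1} l_2 = (a_2/a_1, (c_2 - c_1)/a_1)$ turns the energy equation $l_1^{-1} l_2 = l_3^{-1} l_4$ into the pair $a_1 a_4 = a_2 a_3$ and $a_3(c_2 - c_1) = a_1(c_4 - c_3)$, which is in turn equivalent to the existence of $(r, \delta) \in \R^* \times \R$ with $(a_3, c_3) = T_{r, \delta}(a_1, c_1)$ and $(a_4, c_4) = T_{r, \delta}(a_2, c_2)$, where $T_{r, \delta}(a, c) := (ra, rc + \delta)$. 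This yields the key reformulation
\[
E(\cL) = \sum_{(r, \delta) \in \R^* \times \R} N(r, \delta)^2, \qquad N(r, \delta) := |\cL \cap T_{r, \delta}(\cL)|.
\]

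Unpacking $T_{r, \delta}(l) \in \cL$ shows that $N(r, \delta)$ counts $(c, c', d, d') \in C^2 \times D^2$ with $c' = rc + \delta$ and $c'd' - rcd = \mu(1 - r)$. Squaring and summing over $(r, \delta)$ produces an $8$-tuple count coupled by $c_1' - c_2' = r(c_1 - c_2)$, which I split according to whether $c_1 = c_2$. In the degenerate branch $c_1 = c_2$ (forcing $c_1' = c_2'$ and leaving $r$ free), the constraint $c'd' - rcd = \mu(1 - r)$ defines, for fixed $(c, c')$ and varying $r$, a pencil of lines in the $(d, d')$-plane all passing through the common point $(\mu/c, \mu/c')$; the sum of squares of their intersections with $D \times D$ is $O(|D|^3)$ by the trivial pencil argument (three of the four points of a collinear quadruple determine the fourth), so this branch contributes $\ll |C|^2|D|^3$, matching exactly the second term of the theorem.

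The generic branch $c_1 \neq c_2$ carries the main work. Here $r = (c_1' - c_2')/(c_1 - c_2)$ is determined by the four $c$-coordinates, and rewriting $c_i' d_i' - r c_i d_i = \mu(1 - r)$ as $a_i' = r a_i$ (with $a_i = c_i d_i - \mu$, $a_i' = c_i' d_i' - \mu$) shows that the surviving constraint is $a_1'/a_1 = a_2'/a_2 = r$. The count then schematically takes the form $\sum_\eta r_{D/D}(\eta)^2 S(\eta)$, with $S(\eta)$ counting $C^4$-quadruples on an $\eta$-dependent curve. A naive Cauchy--Schwarz gives only $E^\times(C) \cdot E^\times(D) \ll |C|^3 |D|^3$, the trivial estimate. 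To extract the crucial $|D|^{1/2}$ saving I would dyadically decompose by the size of $r_{D/D}(\eta)$ and, at each level, apply the Szemer\'edi--Trotter theorem to the incidence problem between the grid $D \times D$ and the moving pencil of lines through $(\mu/c, \mu/c')$, in close analogy with the argument used by Rudnev and Shkredov for $\{(c, cd) : c \in C, d \in D\}$ in \cite{RuSh}. Summing dyadically should yield $\ll |C|^3 |D|^{5/2}$, matching the first term of the theorem. The principal technical obstacle is executing this Szemer\'edi--Trotter step uniformly as the pencil centre varies over $\{(\mu/c, \mu/c') : c, c' \in C\}$; the moving centre together with the transposed role of the $C$-coordinate makes the bookkeeping noticeably more delicate than in the original Rudnev--Shkredov setting.
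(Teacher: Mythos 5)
Your setup is sound and, up to notation, matches the paper's: the reduction to $\lambda=0$ by left-invariance of the energy, the identity $E(\cL)=\sum_{(r,\delta)}|\cL\cap T_{r,\delta}(\cL)|^2$, and the split according to whether $c_1=c_2$ are all correct, and your pencil argument for the degenerate branch does yield the $|C|^2|D|^3$ term (the paper gets it even more simply: $c_1=c_2$ forces $c_3=c_4$, and then three of the four $d$'s determine the fourth). The problem is the generic branch, which carries the main term $|C|^3|D|^{5/2}$ and which you do not actually prove. Two things go wrong. First, the claimed shape $\sum_\eta r_{D/D}(\eta)^2 S(\eta)$ does not match the algebra: the surviving constraint is $a_1'/a_1=a_2'/a_2=r$ with $a_i=c_id_i-\mu$, and these ratios are not ratios of elements of $D$ (that structure is special to the Rudnev--Shkredov family $(c,cd)$, where in effect $\mu=0$); so there is no natural $r_{D/D}$ to decompose over. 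Second, and more importantly, the Szemer\'edi--Trotter step that is supposed to produce the $|D|^{1/2}$ saving is exactly the content of the theorem, and you explicitly defer it as a ``technical obstacle''; as written, the proposal gives nothing better than the trivial $|C|^3|D|^3$ for this branch.

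The paper's route is worth internalising because it sidesteps the moving-pencil problem entirely. Rather than squaring the full four-variable count $N(r,\delta)$, one first peels off a factor of $|D|$: writing the energy system as $\frac{c_2-c_1}{d_1(c_1-\lambda)-\mu}=\frac{c_4-c_3}{d_3(c_3-\lambda)-\mu}=\alpha\neq 0$ together with the slope equation, one observes that once a solution of the $\alpha$-equation is fixed, the slope equation leaves at most $|D|$ choices for the remaining pair $(d_2,d_4)$. Hence $E(\cL)\ll |C|^2|D|^3+N|D|$, where $N=\sum_{\alpha\neq0}n(\alpha)^2$ and $n(\alpha)=|\{(c,c',d)\in C^2\times D:\alpha=\frac{c'-c}{d(c-\lambda)-\mu}\}|$ is a \emph{three}-variable count. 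The level sets $\Lambda_t=\{\alpha\neq 0:n(\alpha)\geq t\}$ are then controlled by a single application of Szemer\'edi--Trotter to the \emph{fixed} point set $C\times\Lambda_t$ and the $|C||D|$ lines $y=\frac{x-c}{d(c-\lambda)-\mu}$, giving $|\Lambda_t|\ll|C|^4|D|^2/t^3$ for $t\geq 2|C|$, and a dyadic summation with threshold $\Delta=|C||D|^{1/2}$ gives $N\ll|C|^3|D|^{3/2}$, hence the main term. To complete your write-up you would either need to carry out this reduction (which your framework permits) or genuinely solve the uniform moving-centre incidence problem you flag at the end; the former is the intended and much easier path.
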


Theorem \ref{energythm2} gives a bound very similar to that obtained in \cite{RuSh} for lines of the form $(c,d)$ or $(c,cd)$, and the proof is also somewhat similar. Theorem \ref{energythm1}, on the other hand, gives a quantitative improvement on the corresponding energy bounds in \cite{RuSh} by exploiting a connection with collinear quadruples. Furthermore, the bound given in Theorem \ref{energythm1} is tight, up to logarithmic factors.


  \subsection{Applications to the Sum-Product Phenomenon}

Our main application of these energy bounds concerns the sum-product phenomenon. Let $A \subseteq \R$ be a finite set. The \emph{sum set} $A+A$ and the \emph{product set}  $AA$ are defined as
$$A+A = \{ a + a' : a, a' \in A \}, \quad AA = \{ aa' : a, a' \in A \}.$$
Other combinations of $A$ are defined analogously. The sum-product phenomenon states that we expect one of these sets to be large with respect to $|A|$. The central conjecture in this area is contained in the seminal paper of Erd\H{o}s and Szemer\'{e}di \cite{ErdSze} from 1983.

\begin{Conjecture}[Erd\H{o}s - Szemer\'{e}di]
Let $A \subset \Z$ be a finite set. Then for all $\epsilon > 0$ we have
$$|A+A| + |AA|\gg |A|^{2 - \epsilon}.$$
\end{Conjecture}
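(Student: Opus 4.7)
This is the Erd\H{o}s--Szemer\'{e}di conjecture, open since 1983; I do not expect to settle it here, and can only outline the standard framework and note where the argument stalls.

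The baseline attack is Elekes' geometric reduction. Consider the line family $\cL = \{y = a(x - b) : a, b \in A\}$ and the point set $\cP = (A + A) \times (AA)$. Each line $y = a(x - b)$ passes through every point $(b + a', a a')$ for $a' \in A$, giving $I(\cP, \cL) \geq |A|^3$. The Szemer\'{e}di--Trotter bound \eqref{SzTr} then forces $|A + A||AA| \gg |A|^{5/2}$, hence $|A + A| + |AA| \gg |A|^{5/4}$. To push further, one replaces naive incidence counting by an energy analysis: Solymosi's bound via multiplicative energy yields exponent $4/3$, and successive refinements (Konyagin--Shkredov, Shakan, Rudnev--Shakan--Shkredov, Rudnev--Stevens) raise this to $4/3 + c$ for a small absolute constant $c > 0$. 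In the spirit of the present paper one would feed the affine energy estimates of Theorems \ref{energythm1} and \ref{energythm2} into Theorem \ref{mishailya8}, choosing $A$, $B$, $C$, $D$ adapted to a sum-product configuration, and iterate together with asymmetric low-doubling statements of the type displayed in the abstract.

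The fundamental obstacle is that every known incidence-based route is ultimately bounded by the $2/3$ exponent in Szemer\'{e}di--Trotter, which imposes a ceiling strictly below exponent $2$. All quantitative records proceed by sharpening various energies inside this ceiling rather than breaching it; getting from $4/3 + c$ to $2 - \epsilon$ seems to demand a qualitatively new input, either an incidence bound sensitive to the arithmetic (rather than merely combinatorial) structure of the point set and line set, or a structural classification of sum-product extremisers. I have no candidate for such an input. The realistic deliverable --- and, honestly, the actual spirit of this paper --- is therefore not the conjecture itself but concrete exponent improvements in specific sum-product questions, such as the $|A|^{2 + 1/14}$ bound for $(ab - cd)/(a - c)$ stated in the abstract.
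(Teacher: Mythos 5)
The statement you were asked about is the Erd\H{o}s--Szemer\'{e}di conjecture, which the paper presents as an open conjecture with no proof, so there is no proof of the paper's to compare your attempt against. You correctly recognise that it is open, and your account of the partial progress (Elekes' $5/4$ exponent via Szemer\'{e}di--Trotter, Solymosi's $4/3$, the subsequent $4/3+c$ refinements) and of the reasons the full exponent $2-\epsilon$ is out of reach is accurate and consistent with how the paper treats the problem.
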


This conjecture states that at least one of $|A+A|$ or $|AA|$ must be large, but even the extremal cases when one of the sets is close to linear in size are not completely settled. It was proven by Elekes and Ruzsa \cite{ER} that
\[|A+A|^4|AA| \gg \frac{|A|^6}{\log|A|},
\]
and in particular it follows that 
\begin{equation} \label{ERsym}
|A+A|\leq K|A| \Rightarrow |AA| \gg \frac{|A|^2}{K^4\log|A|}.
\end{equation}
A version of \eqref{ERsym} with a better dependency on $K$ follows from the beautiful work of Solymosi \cite{So}. 

We consider an asymmetric version of this question, where the ultimate goal would be to prove a result of the form
\begin{equation} \label{ERasym}
|A+A|\leq K|A| \Rightarrow |AB| \gg \frac{|A||B|}{K^C\log|A|},
\end{equation}
for any $B \subset \mathbb R$ and some constant $C$. Such a result would have some striking implications, and appears to be out of reach at present. It follows from \cite {E} (and also from \cite{So}), that
\begin{equation} \label{ERbasic}
|A+A|\leq K|A| \Rightarrow |AB| \gg \frac{|A||B|^{1/2}}{K}.
\end{equation}
So \eqref{ERbasic} represents the `threshold bound' for this problem; that is, the bound that can be deduced straightforwardly from the known methods. We present the following small improvement.

\begin{Theorem} \label{thm:FSMP}
For all $\kappa>0$ there exists $k=k(\kappa)>0$ such that for all $A,B \subset \mathbb R$ with $|A|^{\kappa} \leq |B|$ and writing $|A+A|=K|A|$, we have
\[ |AB| \gg \frac{|A||B|^{1/2+k}}{K^{4/3}}.
\]

\end{Theorem}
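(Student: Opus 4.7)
The strategy is to refine the classical Elekes-type derivation of the threshold bound \eqref{ERbasic}, replacing the Szemer\'edi--Trotter inequality by Theorem \ref{mishailya8} and using one of the new energy estimates (most naturally Theorem \ref{energythm1}) in place of the Rudnev--Shkredov bound \eqref{RSEnergy}.

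I would take the Cartesian point set $\mathcal{P}=(A+A)\times AB$ and the line family $\mathcal{L}=\{y=b(x-a):a\in A,\,b\in B\}$, so $|\mathcal{L}|=|A||B|$. For each $a,a'\in A$ and $b\in B$, the line $\ell_{a,b}$ passes through $(a+a',\,a'b)\in\mathcal{P}$, giving $I(\mathcal{P},\mathcal{L})\ge |A|^2|B|$. The lines of $\mathcal{L}$ have the form $(b,-ab)$ in the affine-group parametrisation, which coincides with the second Rudnev--Shkredov family; thus \eqref{RSEnergy} gives $E(\mathcal{L})\ll|A|^3|B|^{5/2}+|A|^2|B|^3$. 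Plugging this into Theorem \ref{mishailya8} (with $|AB|$ in the role of the first coordinate set and $|A+A|$ in the role of the second) and solving for $|AB|$ already yields
\[
|AB|\gg \frac{|A||B|^{1/2}}{K^{4/3}},
\]
where the exponent $4/3$ on $K$ arises from the $|A+A|^{2/3}$ factor in Theorem \ref{mishailya8} after clearing the other exponents. This matches the $K$-dependence in Theorem \ref{thm:FSMP} but only reaches the threshold exponent $1/2$ on $|B|$.

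To pick up the additional $|B|^k$, I would apply Theorem \ref{energythm1} in place of \eqref{RSEnergy}. Geometrically, the lines of $\mathcal{L}$ are precisely the lines joining $(a,0)$ to $(a+1/b,1)$, i.e.\ joining points of $A\times\{0\}$ to points of $(A+1/B)\times\{1\}$, which is exactly the ``two horizontal lines'' configuration underlying Theorem \ref{energythm1}. The bound $|C|^{5/2}|D|^{5/2}$ there is strictly smaller than \eqref{RSEnergy} once $|C|\approx|D|$, and this saving propagates through Theorem \ref{mishailya8} to give a power-saving $|B|^{k}$ over $|B|^{1/2}$. The hypothesis $|A+A|\le K|A|$ enters via Pl\"unnecke--Ruzsa to control the various sumsets $A\pm A$, $2A-A$, etc., and any additional powers of $K$ picked up in this reduction are absorbed into the existing $K^{4/3}$.

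The main obstacle I foresee is controlling the size of $D=A+1/B$, which without additional structure on $B$ can be as large as $|A||B|$ and would wipe out the Theorem \ref{energythm1} gain. A dyadic pigeonholing step, passing to a popular subset of $A\times B$ on which the representation function for $A+1/B$ is large (or equivalently, where a related multiplicative energy concentrates), will be needed to guarantee $|D|$ is of the right order. The hypothesis $|A|^{\kappa}\le|B|$ then serves the dual purpose of suppressing the secondary terms $|C|^4+|D|^4$ in Theorem \ref{energythm1} and the error $|AB|^{1/2}|\mathcal{L}|$ in Theorem \ref{mishailya8}, keeping the configuration in the regime where the main terms dominate. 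The value of $k=k(\kappa)$ falls out of the balancing of these competing exponents; it shrinks as $\kappa\to 0$ but is strictly positive for every $\kappa>0$.
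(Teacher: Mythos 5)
Your Elekes-type setup is exactly the one the paper uses (points $(A+A)\times AB$, lines $y=b(x-a)$, at least $|A|^2|B|$ incidences, Theorem \ref{mishailya8} plus \eqref{RSEnergy} giving the threshold $|AB|\gg |A||B|^{1/2}K^{-4/3}$), but the mechanism you propose for beating the threshold does not work, and this is the heart of the theorem. The lines $(b,-ab)$ do lie inside the family of Theorem \ref{energythm1}, but only with parameters $c=a$ and $d=a-\lambda/b$, so the ambient Cartesian family is built on $C=A$ and $D=A-\lambda/B$, and $|D|$ can be as large as $|A||B|$. Theorem \ref{energythm1} bounds the energy of the \emph{full} product family in terms of $|C|$ and $|D|$; passing to the subset you actually care about only gives $E(\mathcal{L})\le E(\text{full family})\ll |A|^{5/2}(|A||B|)^{5/2}$, which is far worse than the $|B|^{5/2}|A|^3$ you already get from \eqref{RSEnergy}. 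No dyadic pigeonholing rescues this: you would need $|A-\lambda/B|$ to be genuinely small for a positive proportion of the configuration, which fails for generic $B$, and restricting to a popular subset of lines does not shrink the parametrizing sets entering the energy bound. The paper instead imports the much deeper estimate $E(\mathcal{L})\ll |C|^{5/2-\delta}|D|^3$ for multiplicative grids $(c,cd)$ from \cite[Lemma 21]{RuSh} (restated as Theorem \ref{thm:RSEnergyBetter}), whose proof uses growth in the affine group and Shkredov's higher-energy machinery; the hypothesis $|A|^{\kappa}\le |B|$ is exactly the condition $|D|^{\kappa}\le |C|$ needed there, not a device for killing the $|AB|^{1/2}|\mathcal{L}|$ error term.

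There is a second gap: your argument says nothing about the regime where $|B|$ is near or above $|A|^2$. There the condition $|C|\le|D|^2$ of the improved energy bound fails, and moreover the error term $|AB|^{1/2}|A||B|$ in Theorem \ref{mishailya8} genuinely dominates, so the whole incidence approach collapses. The paper handles $|A|^{2-c_0}\le |B|\le |A|^3$ by a completely different route: the superquadratic bound $|AAA|\gg |A|^{2+1/392}K^{-125/56}(\log|A|)^{-C}$ for sets with small sumset (Theorem \ref{thm:FSMTP}) combined with Pl\"unnecke--Ruzsa in the form $|AAA|\le |AB|^3/|B|^2$, and the case $|B|\ge |A|^3$ trivially. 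Without some substitute for this endpoint argument your proof cannot cover all $B$ with $|B|\ge |A|^{\kappa}$. (A minor further slip: to obtain $K^{4/3}$ you need $A+A$ to sit in the coordinate of Theorem \ref{mishailya8} carrying the exponent $2/3$ and $AB$ in the coordinate carrying $1/2$, the opposite of the role assignment you describe, though your stated conclusion is the correct one.)
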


In the same spirit as the Erd\H{o}s-Szemer\'{e}di conjecture, we may also consider sets that are defined using a combination of multiplication and addition, which we expect to \emph{always} be large. These sets are often called `expanders' due to the growth they exhibit. Several expander results, including the bound 
\begin{equation} \label{A(A+A+A+A)} 
|A(A+A+A+A)| \gg \frac{|A|^2}{\log |A|},
\end{equation}
were given by Murphy, Roche-Newton and Shkredov in \cite{MRNS}. Note that this quadratic lower bound is optimal up to logarithmic factors, as can be seen by taking $A$ to be an arithmetic progression.

The same paper also included the bound
\begin{equation}
\label{oldint}
\left| \left\{ \frac{ad-bc}{a-c}  : a,b,c,d \in A \right\} \right| \gg |A|^{2}.
\end{equation}
The quantity $\frac{ad-bc}{a-c}$ has some geometric meaning: if we draw a straight line through $(a,b)$ and $(c,d)$ with $a \neq c$, this line will intersect the $y$-axis at the point
$\left (0, \frac{ad-bc}{a-c}\right)$. So the set in \eqref{oldint} is the set of all $y$-intercepts of lines determined by $A \times A$.

In this paper, we give the following improvement to \eqref{oldint}.


\begin{Theorem}\label{expander}
 Let $A \subset \R$ be a finite set. Then
 $$\left| \left\{ \frac{ac-db}{c-d}  : a,b,c,d \in A \right\} \right| \gg |A|^{2 + 1/14}.$$
\end{Theorem}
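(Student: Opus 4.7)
Define $N(y) := |\{(a,b,c,d)\in A^4 : c\neq d,\ (ac-db)/(c-d)=y\}|$, so that $Y$ is the support of $N$ and $\sum_y N(y) = |A|^4 - |A|^3$. Cauchy--Schwarz gives
\[
|A|^8 \ll \Bigl(\sum_y N(y)\Bigr)^{2}\le |Y|\cdot\sum_y N(y)^2,
\]
so it suffices to show $E:=\sum_y N(y)^2 \ll |A|^{6-1/14}$. Geometrically, writing $P=A\times A$, the rearrangement $c(y-a)=d(y-b)$ identifies $N(y)$ with the number of ordered pairs of points of $P$ collinear with the $y$-axis point $(0,y)$, or equivalently with the multiplicative energy of $A$ and $y-A$. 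Thus $E$ counts ordered 4-tuples $(P_1,P_2,P_3,P_4)\in P^4$ such that the lines $P_1P_2$ and $P_3P_4$ share a $y$-intercept.

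The plan is to combine a double dyadic pigeonhole with Theorem~\ref{mishailya8} fed by Theorem~\ref{energythm1}. First, pigeonhole on $N$ to locate a scale $T$ and a set $Y_T := \{y : N(y) \in [T,2T)\}$ with $|Y_T|T \gtrsim |A|^4/\log|A|$ and $E \lesssim |Y_T|T^2 \log|A|$, reducing the task to $T \ll |A|^{2-1/14}$. For each $y\in Y_T$ a second dyadic step isolates a richness scale $t$ such that the $t$-rich lines through $(0,y)$ dominate $N(y)$. The key structural observation is that if we group the ordered pairs $((c,b),(d,a))\in P^2$ by $\lambda:=b-a\in A-A$, then the associated line takes the form $\bigl(\lambda/(c-d),\, a - \lambda d/(c-d)\bigr)$ in $\text{Aff}(\R)$; a direct computation of $\ell_1^{-1}\ell_2$ shows that the affine-group energy of the resulting $|A|^2$-family is independent of $a$ and, after the substitution $(c,d)\leftrightarrow(d,c)$, falls within the class controlled by Theorem~\ref{energythm1}, yielding an energy bound of $O(|A|^5)$ per $\lambda$.

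Feeding this energy estimate into Theorem~\ref{mishailya8} against a Cartesian product $A\times B$, where $B\subset A$ is chosen via a further popularity pigeonhole so that $|B|$ is strictly smaller than $|A|$, and summing the resulting incidence inequalities over $\lambda\in A-A$ and over the two dyadic scales $T,t$, should produce a system of inequalities whose solution pins down $T\ll|A|^{2-1/14}$. The main technical obstacle lies exactly here: since the natural point set $P=A\times A$ is symmetric while Theorem~\ref{mishailya8} combined with the energy bound $|A|^5$ of Theorem~\ref{energythm1} only matches (and does not beat) Szemer\'edi--Trotter for the symmetric Cartesian product, one must coax asymmetry out of the problem by dyadically restricting to a popular subset $B$ and then balance the three pigeonhole parameters simultaneously; the exponent $1/14$ emerges as the optimum of the resulting linear program.
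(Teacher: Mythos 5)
Your opening reduction is where the argument breaks down. The Cauchy--Schwarz step is valid as an implication, but the statement it reduces to, $E=\sum_y N(y)^2 \ll |A|^{6-1/14}$, is false in general, so nothing downstream can rescue it. Take $A$ to be a geometric progression: then $N(0)$ counts solutions of $ac=bd$ with $c\neq d$, i.e.\ essentially the multiplicative energy of $A$, which is $\Theta(|A|^3)$, and the single term $y=0$ already forces $E\geq N(0)^2\gg |A|^6$. Geometrically, the second moment of the representation function can be dominated by a pencil of rich lines through one point of the $y$-axis, and no incidence bound removes that contribution. The subsequent dyadic pigeonhole does not repair this: you assert both $|Y_T|T\gtrsim |A|^4/\log|A|$ (a first-moment statement) and $E\lesssim |Y_T|T^2\log|A|$ (a second-moment statement) for the same scale $T$, but the scales carrying the first-moment and second-moment mass differ in precisely the problematic examples. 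A reduction of the form ``the first-moment-popular scale satisfies $T\ll|A|^{2-1/14}$'' could in principle be pursued, but that is a different statement from $E\ll|A|^{6-1/14}$ and you give no mechanism for proving it; the remaining steps (``should produce a system of inequalities'', ``coax asymmetry out of the problem'') are not yet an argument. Your identification of the line family with the class of Theorem \ref{energythm1} is also unjustified: that theorem requires intercepts of the fixed form $\mu c/(c-d)$, whereas your lines have intercept $(ac-bd)/(c-d)$ with $a,b$ varying, and you yourself note that even the hoped-for $|A|^5$ energy bound only recovers Szemer\'edi--Trotter on a symmetric grid.

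The paper's proof avoids representation functions and second moments entirely. It observes that the set in question is exactly $L(A\times A)\cap l_y$, the set of $y$-intercepts of lines spanned by the grid, and that by the symmetry of $A\times A$ about the line $y=x$ this equals $L(A\times A)\cap l_x$; Theorem \ref{pencilstheoremaxes} then bounds $\max\{|L(P)\cap l_1|,|L(P)\cap l_2|\}$ from below. The lower bound on incidences there comes from Beck's theorem ($\gg|A|^4$ spanned lines, each incident to at least two points of the grid), which is insensitive to how representations are distributed and in particular immune to the geometric-progression obstruction; the upper bound comes from completing the spanned lines to a Cartesian product $S\times Y$ of slope--intercept pairs, dualising the incidences, and applying Theorem \ref{mishailya8} with the energy bound of Theorem \ref{energythm2} (not Theorem \ref{energythm1}) for lines of the form $(a'b'-1,b')$. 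Balancing $|S|^8|Y|^6\gg|A|^{14}|B|^{15}$ with $|S|=|Y|$ and $|B|=|A|$ is what produces the exponent $2+\frac{1}{14}$. If you want to salvage a representation-function approach, you would at minimum need to work with a first-moment restriction to rich values rather than the full second moment, which is a genuinely different and harder statement.
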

 This is an example of a four-variable super-quadratic expander. Several six-variable super-quadratic expanders were proven to exist by Balog, Roche-Newton and Zhelezov \cite{BRNZ}. Typically things get more difficult with less variables, and examples of super-quadratic four variable are rare in the literature. We are aware of only two such results, due to Rudnev \cite{Ru} and Shkredov \cite{Sh}.

  \subsection{Structure of this paper}
 The rest of the paper will be structured as follows. In section \ref{sec:energy} we will prove our two bounds for the energies of line sets, Theorems \ref{energythm1} and \ref{energythm2}. In section \ref{sec:geometric} we introduce a new geometric conjecture and use the new energy bounds to prove some special cases. One of these special cases gives Theorem \ref{expander}. Section \ref{sec:FSMP} gives the proof of Theorem \ref{thm:FSMP}. Finally, in section \ref{sec:other} we collect some other applications of the techniques in this paper and also \cite{RuSh}, with the focus on proving new expander results with three variables. The main result of this section is an improved explicit bound for the size of the set $AA+A$.
 

 \section{Energy of Lines} \label{sec:energy}

In this section we prove Theorems \ref{energythm1} and \ref{energythm2}.
We will make use of the following simple corollary of the Szemer\'{e}di-Trotter Theorem concerning the number of $k$ rich lines defined by a finite point set.
\begin{Corollary} \label{krich}
 Let $P \subset \R^2$ be a finite set of points. Let $k \geq 2$ and let $\cL_k(P)$ denote the set of all lines containing at least $k$ points from $P$. Then we have
 $$|\cL_k(P)| \ll \frac{|P|^2}{k^3} + \frac{|P|}{k}.$$
\end{Corollary}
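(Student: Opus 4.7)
The plan is to deduce this directly from the Szemer\'edi--Trotter bound \eqref{SzTr} by a standard dyadic/popularity argument. The starting point is the trivial observation that if $\cL_k(P)$ denotes the set of lines incident to at least $k$ points of $P$, then each such line contributes at least $k$ incidences, so
\[
k|\cL_k(P)| \leq I(P,\cL_k(P)) \ll |P|^{2/3}|\cL_k(P)|^{2/3} + |P| + |\cL_k(P)|,
\]
where the inequality on the right is \eqref{SzTr} applied with point set $P$ and line set $\cL_k(P)$.

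Next I would split into cases depending on which of the three terms on the right dominates. If the Szemer\'edi--Trotter term dominates, i.e.\ $k|\cL_k(P)| \ll |P|^{2/3}|\cL_k(P)|^{2/3}$, then rearranging gives $|\cL_k(P)|^{1/3} \ll |P|^{2/3}/k$, so $|\cL_k(P)| \ll |P|^2/k^3$. If the second term dominates, i.e.\ $k|\cL_k(P)| \ll |P|$, then $|\cL_k(P)| \ll |P|/k$. The third term cannot strictly dominate once $k \geq 2$, since $k|\cL_k(P)| \ll |\cL_k(P)|$ would force $k \ll 1$; this case is therefore absorbed into the other two.

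Taking the maximum of the two surviving cases yields
\[
|\cL_k(P)| \ll \frac{|P|^2}{k^3} + \frac{|P|}{k},
\]
which is the required bound. There is no real obstacle here; the only point worth noting is that the hypothesis $k \geq 2$ is needed precisely to rule out the degenerate case where the $|\cL_k(P)|$ term of Szemer\'edi--Trotter dominates, and the argument is robust enough that it requires no further combinatorial input beyond \eqref{SzTr} itself.
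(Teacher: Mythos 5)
Your argument is correct and is exactly the standard deduction the paper has in mind: the paper states Corollary \ref{krich} without proof, describing it only as ``a simple corollary of the Szemer\'edi--Trotter Theorem,'' and your popularity argument ($k|\cL_k(P)| \leq I(P,\cL_k(P))$ followed by \eqref{SzTr} and a case split) is that deduction. The only point worth tightening is the third case: rather than saying $k \ll 1$ is impossible, it is cleaner to use a version of \eqref{SzTr} with constant $1$ on the $|\cL|$ term (e.g.\ \cite[Theorem 8.3]{TV}) and subtract $|\cL_k(P)|$ from both sides, using $k - 1 \geq k/2$ for $k \geq 2$; alternatively, when $k = O(1)$ the bound follows from the trivial estimate $|\cL_k(P)| \leq \binom{|P|}{2}$.
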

\subsection{Proof of Theorem \ref{energythm1}}
 \begin{proof}[Proof of Theorem 2]
Recall that the aim is to bound the energy of the set of lines 
 \[\cL:= \left \{\left(\frac{\lambda}{c-d}, \frac{\mu c}{c-d}\right): c \in C, d \in D, c \neq d \right \}.
 \]  
 Inverses of lines in $\cL$ have the form $\left(\frac{c-d}{\lambda},-\frac{\mu}{\lambda}c \right)$. The quantity $E(\cL)$ is therefore the number of solutions to the equations
 \begin{equation}\label{Energyvert1}
     \frac{c_1 - d_1}{c_2 - d_2} =  \frac{c_3 - d_3}{c_4 - d_4} 
 \end{equation}
 \begin{equation}\label{Energyvert2}
     \frac{c_1d_2 - c_2d_1}{d_2 - c_2} =    \frac{c_3d_4 - c_4d_3}{d_4 - c_4}.
 \end{equation}
Note that equation \eqref{Energyvert1} asserts that the line connecting the points $(c_2,c_1)$ and $(d_2,d_1)$ has the same slope as the line connecting the points $(c_4,c_3)$ and $(d_4,d_3)$, i.e. they are parallel. Equation \eqref{Energyvert2} then asserts that these two lines in fact have the same $y$-axis intercept. Indeed, upon calculating the equations of the lines connecting the pairs of points above, the $y$ intercepts are precisely $\frac{c_1d_2 - c_2d_1}{d_2 - c_2}$ and $ \frac{c_3d_4 - c_4d_3}{d_4 - c_4}$. Therefore the lines are the same, and we have a collinear quadruple  $(c_2,c_1)$, $(d_2,d_1)$, $(c_4,c_3)$, and $(d_4,d_3)$. Hence the energy $E(\cL)$ is no larger than the number of collinear quadruples $(p_1,p_2,p_3,p_4) \in (C \times C)^2 \times (D \times D)^2$. Our goal is now to bound the number of such quadruples.

We begin with the trivial observation that there are at most $|C|^2|D|^2$ such quadruples with $p_1=p_2$ and $p_3=p_4$. Therefore,
 \begin{equation*}
 E(\cL) \leq |C|^2|D|^2+ \sum_{\text{lines }l: \max \{|l \cap(C \times C)|,|l \cap (D \times D)| \} \geq 2} |l \cap (C \times C)|^2|l \cap (D \times D)|^2 .
 \end{equation*}
We firstly separate this sum as
 \begin{equation} \label{splitenergy}
  \sum_{\substack{\text{lines } l: \\ |l \cap (C \times C)| \geq 2, \\
  |l \cap (D \times D)| \geq 2}} |l \cap (C \times C)|^2|l \cap (D \times D)|^2 + \sum_{\substack{\text{lines } l: \\ |l \cap (C \times C)| = 1  \\ \text{XOR}\\
  |l \cap (D \times D)| =1 }} |l \cap (C \times C)|^2|l \cap (D \times D)|^2.\end{equation}
 To deal with the second term, note that
  \begin{align*}\sum_{\substack{\text{lines } l: \\ |l \cap (C \times C)| = 1  \\ \text{XOR}\\
  |l \cap (D \times D)| =1 }} |l \cap (C \times C)|^2|l \cap (D \times D)|^2 &= \sum_{ \substack{l: |l \cap (C \times C)| = 1 \\ |l \cap (D \times D)| \geq 2}} |l \cap (D \times D)|^2 
  + \sum_{\substack{l: |l \cap (D \times D)| = 1 \\ |l \cap (C \times C)| \geq 2}} |l \cap (C \times C)|^2 
  \\ & \ll |D|^4 + |C|^4.
 \end{align*}
  We now bound the first summand in \eqref{splitenergy}. By the Cauchy-Schwarz inequality,
  \begin{align}\label{quadruplesestimateenergy}
    \sum_{\substack{\text{lines } l: \\ |l \cap (C \times C)| \geq 2 \\
  |l \cap (D \times D)| \geq 2}} |l \cap (C \times C)|^2|l \cap (D \times D)|^2
  & \leq   \left( \sum_ {\substack{\text{lines } l: \\ |l \cap (C \times C)| \geq 2}} |l \cap (C \times C)|^4 \right) ^{1/2}  
  \left( \sum_{ \substack{\text{lines } l:  \\
  |l \cap (D \times D)| \geq 2}} |l \cap (D \times D)|^4 \right) ^{1/2}
  \end{align}
so that we may instead separately bound the number of quadruples from $C \times C$ and $D \times D$. To do this, one can dyadically decompose and apply Corollary \ref{krich} as follows:

\begin{align*}
 \sum_ {\substack{\text{lines } l: \\ |l \cap (C \times C)| \geq 2}} |l \cap (C \times C)|^4 &= \sum_{j=1}^{\log|C|} \sum_{l: 2^j \leq |l \cap (C \times C)| < 2^{j+1}} |l \cap (C \times C)|^4
 \\& \ll \sum_{j=1}^{\log|C|} \left(2^j|C|^4+|C|^22^{3j} \right)\ll |C|^5.
\end{align*}

    The same bound holds for the corresponding term for $D$ in \eqref{quadruplesestimateenergy}. Putting everything together, it follows that
    \[
    E(\cL) \ll |C|^{5/2}|D|^{5/2}+|C|^4+|D|^4,
    \]
    as required.
 \end{proof}
 
 We remark that Theorem \ref{energythm1} also holds for suitably small sets in the finite field setting, up to an addition logarithmic factor. This is because recent developments in finite field incidence theory, stemming from the work of Rudnev \cite{PointPlane}, give good bounds for the number of collinear quadruples in a Cartesian product. In particular, it follows from the work of Stevens and de Zeeuw \cite{SDZ} that $C \times C$ contain $O(|C|^5\log|C|)$ collinear quadruples. This bound is tight up to the logarithmic factor, as can be seen by counting the collinear quadruples along axis parallel lines.
 
 \subsection{Proof of Theorem \ref{energythm2}}
 \begin{proof}[Proof of Theorem 3]
 The set of lines $\cL$ is of the form
 \[
 \cL=\left\{ ( d(c - \lambda) - \mu, c) : c \in C, d \in D, d(c-\lambda)-\mu \neq 0 \right \}
 \]
  for some finite sets $C, D \subset \R$ and $\lambda$, $\mu \in \R$. Inverses of lines in $\cL$ have the form $\left(\frac{1}{ d(c-\lambda) - \mu}, \frac{-c}{ d(c-\lambda) - \mu}\right)$. The energy of $\cL$ is given by the number of solutions to the equations
 \begin{align} \label{energy1}
 \frac{d_2(c_2 - \lambda) - \mu}{d_1(c_1 - \lambda) - \mu} &=  \frac{d_4(c_4 - \lambda) - \mu}{d_3(c_3 - \lambda) - \mu},  \\ \frac{c_2 - c_1}{d_1(c_1 - \lambda) - \mu} &= \frac{c_4 - c_3}{d_3(c_3 - \lambda) - \mu}. \label{energy2}
 \end{align}
 If we have $c_1 = c_2$ then in order for the second equation to be satisfied it must be the case that $c_3 = c_4$. The first equation then becomes $\frac{d_2(c_1 - \lambda) - \mu}{d_1(c_1 - \lambda) - \mu} = \frac{d_4(c_3 - \lambda) - \mu}{d_3(c_3 - \lambda) - \mu}$, the number solutions to which is at most $|D|^3$.\footnote{One can obtain better estimates for this situation by taking more care, but since it leads only to a lower order error term in the final estimate it does not seem important.} The number of solutions to \eqref{energy1} and \eqref{energy2} with $c_1 = c_2$ is therefore at most $|C|^2 |D|^3$.

 For the remaining solutions we have $c_2 \neq c_1$, and so the common solution to \eqref{energy2} must be non-zero. Let $N$ denote the number of  solutions to
\[
\frac{c_2 - c_1}{d_1(c_1 - \lambda) - \mu} = \frac{c_4 - c_3}{d_3(c_3 - \lambda) - \mu} \neq 0.
\]

We will show that
  \begin{equation} \label{Naim}
  N \ll |C|^3|D|^{3/2}.
  \end{equation}
  It then follows that the total number of solutions to the system \eqref{energy1}, \eqref{energy2} is $O(|C|^3|D|^{5/2}+|C|^2|D|^3)$. Indeed, the number of solutions to this system for which \eqref{energy2} is non-zero is at most $N|D|$ since for each solution to \eqref{energy2} there are at most $|D|$ possible valid choices for the remaining variables $d_2,d_4 \in D$ that satisfy \eqref{energy1}.

It remains to bound $N$ as in \eqref{Naim}. In order to do so, we define the quantity 
$$n(\alpha) = \left| \left\{ (c,c',d) \in C^2 \times D : \alpha = \frac{c' - c}{d(c - \lambda) - \mu} \right\} \right|$$
so that 
\begin{equation} \label{sumforN}
N = \sum_{\alpha \neq 0} n(\alpha)^2.\end{equation}
Our method for bounding $N$ will involve decomposing this sum over rich and poor $\alpha$ in terms of $n(\alpha)$. We therefore define the set of $t$\emph{-rich} values of $\alpha$ as 
$$\Lambda_t = \left\{ \alpha \neq 0 : n(\alpha) \geq t \right\}.$$

We now aim to prove that for $t \geq  2|C|$, we have
\begin{equation}\label{lambdatbound}
|\Lambda_t| \ll \frac{|C|^4|D|^2}{t^3}.\end{equation}
We prove this using the Szemer\'{e}di-Trotter theorem. Define the line set 
$$L = \left\{ y = \frac{x - c}{d(c - \lambda) - \mu} : (c,d) \in C \times D, d(c-\lambda) -\mu \neq 0 \right\}.$$
Note that as $L$ consists precisely of inverses of the lines in $\cL$, since inverses are unique we have $|L| = |\cL| = |C||D|$. Our point set is the Cartesian product $P = C \times \Lambda_t$. Note that for any $y \in \Lambda_t$, there are at least $t$ solutions to the equation 
$$y =\frac{c' - c}{d(c - \lambda) - \mu}$$
so that the point $(c',y)$ is incident to the line $y = \frac{x - c}{d(c - \lambda) - \mu}$. This shows that we have
$$|\Lambda_t|t \leq I(P,L).$$
Bounding the other side by the Szemer\'{e}di-Trotter theorem\footnote{Here we take a little more care with the multiplicative constant in order to quickly dismiss the second error term. This precise statement can be found in \cite[Theorem 8.3]{TV}.} we have
$$|\Lambda_t|t \leq 4|\Lambda_t|^{2/3} |C|^{4/3} |D|^{2/3} + 4|C||D| + |\Lambda_t||C|.$$
Note that using $t \geq 2|C|$, the third term on the right hand side above can be discarded. Therefore,
$$
|\Lambda_t| \ll \frac{|C|^4|D|^2}{t^3} +\frac{|C||D|}{t} .$$
If $t\leq |C|^{3/2}|D|^{1/2}$ then the first term is dominant and we have the desired conclusion \eqref{lambdatbound}. However, if $t \geq |C|^{3/2}|D|^{1/2}$ then $\Lambda_t$ is empty since
\[
n(\alpha) \leq \min\{|C|^2, |C||D|\} \leq |C|^{3/2}|D|^{1/2}
\]
for all $\alpha \neq 0$. This concludes the proof of \eqref{lambdatbound}.

Let $\Delta \geq 2|C|$ be a parameter to be determined later. We decompose the sum \eqref{sumforN} and apply the bound \eqref{lambdatbound} as follows:
\begin{align} \label{Ndecomp}
    N &= \sum_{\alpha \neq 0   :   n(\alpha) < \Delta}n(\alpha)^2 +  \sum_{\alpha  \neq 0 : n(\alpha) \geq \Delta}n(\alpha)^2 \nonumber \\ & \leq   \Delta \sum_{\alpha   :   n(\alpha) < \Delta}n(\alpha) + \sum_{i\geq 0} \sum_{\substack{\alpha \neq 0 : \\ 2^i \Delta \leq n(\alpha) < 2^{i+1} \Delta}}n(\alpha)^2\nonumber \\
    & \leq \Delta |D||C|^2 + \sum_{i\geq 0} |\Lambda_{2^i \Delta} | (2^{i+1} \Delta)^2
    \\& \ll  \Delta |D||C|^2 + \sum_{i\geq 0} \frac{|C|^4|D|^2}{ (2^{i} \Delta)}
    \\& \ll \Delta |D||C|^2 +  \frac{|C|^4|D|^2}{ \Delta}.
\end{align}

We now optimise our choice of $\Delta$ to balance these terms; we set $\Delta = |C||D|^{1/2}$ (so the assumption $\Delta \geq 2|C|$ is valid as long as $|D| \geq 4$) and find the bound
$$N \ll |C|^3|D|^{3/2}.$$
This completes the proof of \eqref{Naim}, and thus also the proof of Theorem \ref{energythm2}.
 \end{proof}

 \section{A New Geometric Problem} \label{sec:geometric}
 
 In this section we introduce a new geometric problem and prove some first results towards a more general conjecture. As a corollary of one of these results, a proof of Theorem \ref{expander} is given.
 
 
 Let $P \subset \R^2$ be a finite set of non-collinear points, and define $L(P)$ to be the set of lines defined by $P$, that is, all lines which contain at least two points from $P$. A classical question in discrete geometry is that of determining the minimum possible number of directions defined by $P$. An optimal bound for this question was given by Ungar \cite{Ungar}, who proved that a set of $2N$ non-collinear points determine at least $2N$ directions.

We may consider this result as concerning the number of intersections of $L(P)$ with the projective line at infinity in $P^2(\R)$. In principle there is nothing special about this line at infinity, and we can ask about the number of intersections of $L(P)$ with any arbitrary line. Indeed, after applying a projective transformation to $P$, Ungar's Theorem implies that, for any non-collinear set $P$ of cardinality $2N$ and any line $l$ in the plane such that $l \cap P = \emptyset$, $L(P)$ intersects $l$ in at least $2N$ points. 

Making a small abuse of notation, we write the set of points where lines from $L(P)$ intersect a fixed line $l$ as $L(P) \cap l$. 

The following question then arises; if we take \emph{two} arbitrary lines $l_1$ and $L_2$, can both $L(P) \cap l_1$ and $L(P) \cap l_2$ achieve the minimum value of $\Theta(|P|)$? This question has a sum-product type flavour; we may expect that minimising the intersections of $L(P)$ with one line necessarily makes the intersection with any other line large.

There are some degenerate situations that must first be ruled out. For example, if $P$ contains a rich line, it may be the case that $L(P)$ itself has size $\Theta(|P|)$. Consider, for example, the case when $P$ consists of points on a line and a single point off the line. There are other degenerate cases whereby $L(P) \cap l_1$ and $L(P) \cap l_2$ both have linear size in $|P|$, but such examples that we are aware of come from sets which contain many points on a single line. So, we need to rule out the case when $P$ contains very rich lines. Doing so, we have arrived at the following conjecture.



\begin{Conjecture} \label{conj}
For all $\delta>0$ there exists $\epsilon=\epsilon(\delta)>0$ such that the following holds. Let $P \subset \R^2$ be a finite set of points with the property that no more than $|P|^{1-\delta}$ points of $P$ lie on a line, and let $l_1$ and $l_2$ be arbitrary lines with $l_1 \neq l_2$. Then
$$|L(P) \cap l_1| + |L(P) \cap l_2| \gg |P|^{1 + \epsilon}.$$
\end{Conjecture}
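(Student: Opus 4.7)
The plan is to reduce the conjecture, via a projective transformation and affine--group duality, to a Cartesian--product incidence problem, and then to close the argument using Theorem~\ref{mishailya8} combined with a power--saving energy bound.

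First, apply a projective transformation so that $l_{1}$ becomes the line at infinity and $l_{2}$ becomes the $y$--axis; since such maps preserve collinearity and the richness of lines, this is without loss of generality. Write $S:=L(P)\cap l_{1}$ for the set of slopes and $T:=L(P)\cap l_{2}$ for the set of $y$--intercepts of lines in $L(P)$; then, viewed as elements of the affine group, every line of $L(P)$ lies in the Cartesian product $S\times T$, and the conjecture becomes the statement that $|S|+|T|\gg|P|^{1+\epsilon}$.

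Second, dualize: send each line $y=sx+t$ of $L(P)$ to the point $(s,t)\in S\times T$, and send each point $(p,q)\in P$ to the affine--group element $(-p,q)$, which represents the line $t=q-ps$ in the $(s,t)$--plane. Writing $\cL^{*}=\{(-p,q):(p,q)\in P\}$, duality preserves incidences, and since $S\times T$ is a genuine Cartesian product, Theorem~\ref{mishailya8} gives
\[
I(P,L(P)) \;=\; I(S\times T,\cL^{*}) \;\ll\; |T|^{1/2}|S|^{2/3}\,E(\cL^{*})^{1/6}\,|P|^{1/3} + |T|^{1/2}|P|.
\]
In the other direction, Beck's theorem (applicable because no line contains more than $|P|^{1-\delta}$ points) yields $|L(P)|\gg|P|^{2}$ and hence $I(P,L(P))\gg|P|^{2}$. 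A short computation shows that, under the assumption $\max(|S|,|T|)\le|P|^{1+\epsilon}$, the second term cannot dominate and one is reduced to an upper bound of the form
\[
E(\cL^{*}) \;\ll\; |P|^{3-\eta}, \qquad \eta=\eta(\delta)>0,
\]
from which the conjecture follows with any $\epsilon<\eta/7$.

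The main obstacle is producing precisely this power--saving energy bound for an arbitrary $P$ constrained only by the non--rich--line hypothesis. For structured $P$ the situation is tractable: in the important case $P=A\times A$, $\cL^{*}$ is itself a grid in the affine group, \eqref{RSEnergy} gives $E(\cL^{*})\ll|A|^{11/2}$, and the above scheme yields $\max(|S|,|T|)\gg|A|^{29/14}$, which is exactly the exponent behind Theorem~\ref{expander}. For a generic $P$, unpacking the definition shows that $E(\cL^{*})$ counts pairs of points of $P$ mapped into $P$ by a common affine transformation $(p,q)\mapsto(sp,q-tp)$; my strategy would be to dyadically decompose $P$ by line--richness, apply Corollary~\ref{krich} together with a Balog--Szemer\'edi--Gowers style refinement to extract an approximate Cartesian sub--product of $P$ carrying a positive proportion of the energy, and then invoke Theorem~\ref{energythm1} or~\ref{energythm2} on that sub--product. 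Retaining a positive fraction of the energy uniformly in $\delta$ through this refinement step is where I expect the argument to break, and appears to be the central barrier to Conjecture~\ref{conj} in full generality.
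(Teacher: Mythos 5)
The statement you are trying to prove is stated in the paper as Conjecture~\ref{conj}, and the authors do not prove it: they explicitly present it as an open problem and only establish the special cases in Theorems~\ref{pencilstheoremaxes} and~\ref{pencilstheorem}, where $P=A\times B$ is a Cartesian product and the lines $l_1,l_2$ are further constrained. So there is no proof in the paper to compare yours against, and your proposal is not a proof either --- as you yourself concede in the final sentence. Your reduction (projective transformation, completion of $L(P)$ into $S\times T$, point--line duality, Theorem~\ref{mishailya8}, Beck's theorem) is sound and is exactly the scheme the paper runs in its special cases; your numerology is also correct, e.g.\ for $P=A\times A$ the bound $E(\cL^{*})\ll|A|^{11/2}$ does reproduce the exponent $2+\tfrac{1}{14}$ of Theorem~\ref{expander}, and the reduction of the general conjecture to a bound $E(\cL^{*})\ll|P|^{3-\eta}$ with $\epsilon<\eta/7$ checks out.

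The genuine gap is the step you flag: producing a power--saving bound on $E(\cL^{*})$ for an arbitrary $P$ with no $|P|^{1-\delta}$-rich line. This is not a technicality that a Balog--Szemer\'edi--Gowers refinement will patch. First, BSG-type statements extract a large subset with controlled doubling/energy in a \emph{group}, whereas Theorems~\ref{energythm1} and~\ref{energythm2} require the lines to carry a very specific two-parameter algebraic form $\bigl(\frac{\lambda}{c-d},\frac{\mu c}{c-d}\bigr)$ or $(d(c-\lambda)-\mu,c)$; an ``approximate Cartesian sub-product'' of a general dual line family has no reason to be of either shape, so there is nothing to invoke. Second, even granting a genuine Cartesian structure, passing to a subset carrying a positive proportion of $E(\cL^{*})$ and bounding the energy \emph{of that subset} does not bound the energy of the whole family, and a subset of a low-energy grid can itself be essentially arbitrary. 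Third, the non-rich-line hypothesis alone is a rather weak constraint on $E(\cL^{*})$: a point set that is a union of $|P|^{\delta}$ lines each carrying $|P|^{1-\delta}$ points dualizes to a union of concurrent pencils, each of which contributes multiplicative energy of its slope set, and this already pushes $E(\cL^{*})$ up to about $|P|^{3-2\delta}$ before one even considers cross terms. Controlling this for general $P$ is precisely the obstruction that keeps Conjecture~\ref{conj} open, so you should present your argument as a conditional reduction, not as a proof.
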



One way to simplify the conditions of the conjecture is to impose the restriction that $P$ takes the form of a balanced grid. This is what we do for the remainder of the section, taking $P=A \times B$ with $A$ and $B$ having comparable sizes. In such cases, we can apply the energy bounds from the previous section in order to prove the following two cases of Conjecture \ref{conj}, where the two lines $l_1$ and $l_2$ have additional restrictions.


\begin{Theorem}\label{pencilstheoremaxes}
Let $P=A \times B$ with  $A, B \subset \mathbb R^*$ such that $|B| \leq |A|^2$. Let $l_1$ denote any vertical or horizontal line not defined by $L(P)$, and let $l_2$ denote any affine line not parallel to $l_1$. Then
 $$\max \{ |L(P) \cap l_{1}|, |L(P) \cap l_2| \} \gg |A||B|^{15/14}.$$
\end{Theorem}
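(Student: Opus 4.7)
The plan is to reduce, by an affine change of coordinates preserving the product structure of $P$, to a canonical position with $l_1 = \{y=0\}$, so that $0 \notin B$ (since $l_1 \notin L(P)$). There are two cases depending on whether $l_2$ is vertical (so $l_2 = \{x=0\}$ after further translation, with $0 \notin A$) or has a nonzero finite slope. Both cases fit the same framework; I describe the first.

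Set $U = \{u : (u,0) \in L(P) \cap l_1\}$ and $V = \{v : (0,v) \in L(P) \cap l_2\}$, and suppose for contradiction that both $|U|, |V| \leq cT$ with $T = |A||B|^{15/14}$. Every line in $L(P)$ that is neither horizontal nor vertical has $x$-intercept in $U$ and $y$-intercept in $V$.

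The key step is to apply Theorem~\ref{mishailya8} to the grid $A \times B$ with the family
\[
\mathcal{L}_{b^*} = \{ \text{line through } (u,0) \text{ and } (a,b^*) : u \in U,\, a \in A \},
\]
for a fixed $b^* \in B$. A direct calculation shows these lines fit the hypothesis of Theorem~\ref{energythm1} with $C = U$, $D = A$, $\lambda = -b^*$, $\mu = b^*$, and hence
\[
E(\mathcal{L}_{b^*}) \ll |U|^{5/2} |A|^{5/2} + |U|^4 + |A|^4.
\]
For the incidence lower bound, I use that every line in $\mathcal{L}_{b^*}$ trivially contains a grid point $(a, b^*)$, and that for every triple $(a, a', b')$ with $a \neq a'$ and $b' \neq b^*$, the line through $(a, b^*)$ and $(a', b')$ automatically has $x$-intercept in $U$ by the definition of $U$, so $(a', b')$ is an additional grid point on a line of $\mathcal{L}_{b^*}$. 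Aggregating across $b^* \in B$ and using a pigeonhole to select a favourable row (combined with the Beck-type lower bound $|L(P)_{nh,nv}| \gg |A|^2|B|^2$, which is valid under the assumption $|B| \leq |A|^2$ since then no axis-parallel line contains too many points of $P$), I can guarantee a choice of $b^*$ for which the incidence count is at least $\gg |A|^2|B|$.

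Plugging the energy bound, $|\mathcal{L}_{b^*}| \leq |U||A|$, and this incidence lower bound into Theorem~\ref{mishailya8} produces an inequality of the form
\[
|A|^2 |B| \ll |B|^{1/2} |A|^{2/3} E(\mathcal{L}_{b^*})^{1/6} (|U||A|)^{1/3} + |B|^{1/2} |U||A|,
\]
which I would then optimize, feeding back the assumption $|V| \leq cT$ to restrict to those lines in $\mathcal{L}_{b^*}$ whose $y$-intercept lies in $V$ (this is where the second intercept set enters the estimate, as the additional pairs $(a',b')$ produce $y$-intercepts in $V$). A careful balance of the main term $|U|^{5/2}|A|^{5/2}$ against the term $|U|^4$ in the energy bound, together with the two constraints $|U|, |V| \leq cT$, yields the desired contradiction at $T = |A||B|^{15/14}$.

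The hardest step will be arriving at precisely the $15/14$ exponent; my expectation is that this requires either the skew-grid generalisation of Theorem~\ref{mishailya8} advertised in the introduction, or a two-sided analysis in which the bounds on $|U|$ and $|V|$ are combined (rather than just a one-sided application using $|U|$ alone), so that both constraints simultaneously control $|\mathcal{L}_{b^*}|$ and the number of additional incidences produced by the $U$-intercept identity. Once the optimisation is carried out, the assumed upper bound $T < |A||B|^{15/14}$ contradicts the incidence inequality, completing the proof.
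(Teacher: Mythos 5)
There is a genuine gap here, and it is the one you flag yourself: the framework you set up cannot produce the exponent $15/14$, and no optimisation within it will fix this. The structural problem is which side of Theorem \ref{mishailya8} the unknown quantities sit on. You apply Theorem \ref{mishailya8} with $A \times B$ as the point grid, so $U$ enters only through $|\mathcal{L}_{b^*}| \leq |U||A|$ and through $E(\mathcal{L}_{b^*})$, while $V$ does not enter at all. Running your numbers: the main term is $|B|^{1/2}|A|^{2/3}\bigl(|U|^{5/2}|A|^{5/2}\bigr)^{1/6}\bigl(|U||A|\bigr)^{1/3} = |B|^{1/2}|A|^{17/12}|U|^{3/4}$, and comparing with the incidence lower bound $|A|^2|B|$ gives only $|U| \gg |A|^{7/9}|B|^{2/3}$, far short of $|A||B|^{15/14}$ (and the terms $|U|^4$, $|A|^4$ in the energy only make matters worse). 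Even the natural two-sided repair --- completing the non-axis-parallel part of $L(P)$ to the set of all lines with $x$-intercept in $U$ and $y$-intercept in $V$, which is a family of the form $(-v/u,\,v)$ covered by Theorem \ref{energythm2}, and using the Beck bound $I \gg |A|^2|B|^2$ --- yields only $\max\{|U|,|V|\} \gg |A|^{16/19}|B|^{18/19}$. Because Theorem \ref{mishailya8} is asymmetric, the unknown large sets must play the role of the \emph{point grid}, not of the line family.

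The paper's proof achieves exactly this via two moves missing from your sketch. First, a projective transformation sends $l_1$ to the line at infinity and $l_2$ to the $y$-axis; the image $P'$ of $A \times B$ consists of points of the form $(a'b'-1,\,b')$, and the two quantities to be bounded become the slope set $S$ and the $y$-intercept set $Y$ of $L(P')$. Second, point--line duality: completing the non-axis-parallel part of $L(P')$ to $S \times Y$, one has $|A|^2|B|^2 \ll I(P', S\times Y) = I(S\times Y, L')$, where $L'$ is the set of lines $y=-g_1x+g_2$ dual to the points $(g_1,g_2)\in P'$. These have the affine-group form $(-a'b'+1,\,b')$, so it is Theorem \ref{energythm2} (not Theorem \ref{energythm1}) that applies, giving $E(L') \ll |A|^3|B|^{5/2}$ under the hypothesis $|B|\leq |A|^2$. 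Theorem \ref{mishailya8} applied to the grid $S\times Y$ and the line set $L'$ of size $|A||B|$ then gives $|A|^2|B|^2 \ll |Y|^{1/2}|S|^{2/3}|A|^{5/6}|B|^{3/4} + |Y|^{1/2}|A||B|$, whence $|S|^8|Y|^6 \gg |A|^{14}|B|^{15}$ and the claimed bound. Your identification of the lines through $(u,0)$ and $(a,b^*)$ as a family of Theorem \ref{energythm1} type is correct as far as it goes, but it is not the relevant energy computation; the relevant one is for the dual of the (projectively transformed) point set itself.
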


If one of the fixed lines is $l_{\infty}$, Theorem \ref{energythm2} can be used to give a further quantitative improvement.

\begin{Theorem}\label{pencilstheorem}
 Let $P = A \times B$ be a finite Cartesian product, let $l_{\infty}$ denote the line at infinity, and let $l_1$ be an arbitrary non-vertical and non-horizontal line. If $|A|^{5/3} \geq |B| \geq |A|^{3/5}$, then we have
 $$\max \{ |L(P) \cap l_{\infty}|, |L(P) \cap l_1| \} \gg |A|^{15/14}|B|^{15/14}.$$
\end{Theorem}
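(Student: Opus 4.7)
The proof strategy parallels Theorem~\ref{pencilstheoremaxes}, replacing Theorem~\ref{energythm1} with Theorem~\ref{energythm2} at the energy step to exploit the rigidity that appears when one of the two lines is $l_\infty$. By applying an axis-preserving affine transformation $(x,y)\mapsto(\alpha x+\beta,\gamma y+\delta)$ — which preserves both $l_\infty$ and the Cartesian product structure of $P=A\times B$ — I may assume $l_1$ is the line $y=x$. Writing $M:=L(P)\cap l_\infty$ with $|M|=D$, and $T:=\{t:(t,t)\in L(P)\cap l_1\}$ with $|T|=J$, I note that every non-axis line of $L(P)$ distinct from $l_1$ passes through a unique point $(t,t)\in l_1$ with a unique slope $m\in M$, and hence has the form $y = mx + (1-m)t$, i.e.\ corresponds to the affine-group element $(m,(1-m)t)$. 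Up to $O(|A|+|B|)$ axis-parallel exceptions, $L(P)$ is therefore contained in the skew grid
\[
\cL^* = \{(m,(1-m)t):m\in M,\,t\in T\},\qquad |\cL^*|\le DJ,
\]
and the goal becomes $\max(D,J)\gg|A|^{15/14}|B|^{15/14}$.

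Applying the shear $\phi(x,y)=(x-y,y)$, which fixes $l_\infty$ and sends $l_1$ to the $y$-axis, one obtains a skew grid $P'=\{(a-b,b):(a,b)\in A\times B\}$ and converts $\cL^*$ into the Cartesian product $\cL^{*'}=M'\times T$, with $M'=M/(1-M)$ of size $D$. The skew-grid generalisation of Theorem~\ref{mishailya8} (advertised in the paper's abstract) then gives
\[
I(P',\cL^{*'})\ll|B|^{1/2}|A|^{2/3}E(\cL^{*'})^{1/6}|\cL^{*'}|^{1/3}+|B|^{1/2}|\cL^{*'}|.
\]
Since shears preserve incidences, $I(P,\cL^*)=I(P',\cL^{*'})$, and by Beck's theorem applied to the Cartesian product $A\times B$ (plus pair counting and subtracting the $O(|A||B|)$ incidences coming from axis-parallel lines), this quantity is $\gg|A|^2|B|^2$. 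For the energy, I view $\cL^{*'}$ as $|T|$ pencils through the points $\{(0,t):t\in T\}$ all carrying the common slope set $M'$, and embed it in a family of Theorem~\ref{energythm2}'s form $(d(c-\lambda)-\mu,c)$ with $\lambda,\mu$ chosen so that the $d$-parameter set has size of order $D$ rather than the naive $DJ$. Theorem~\ref{energythm2} then yields an asymmetric bound of shape $E(\cL^{*'})\ll J^3D^{5/2}+J^2D^3$, which in the regime $D\le J$ is sharper than the symmetric Rudnev–Shkredov Cartesian-product bound \eqref{RSEnergy} and is what powers the $|A|^{1/14}$ improvement over Theorem~\ref{pencilstheoremaxes}.

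Feeding the energy estimate into the skew-grid incidence bound, matching it against $I\gg|A|^2|B|^2$, and optimising over $D$ and $J$ under the hypothesis $|A|^{5/3}\ge|B|\ge|A|^{3/5}$ — which controls which of the two terms in each bound dominates — gives $\max(D,J)\gg|A|^{15/14}|B|^{15/14}$. I expect the delicate part of the argument to be the energy embedding: a careless embedding of $\cL^{*'}$ into Theorem~\ref{energythm2}'s form produces $|D_0|\approx DJ$, at which point Theorem~\ref{energythm2} is actually weaker than Rudnev–Shkredov and no gain is obtained. The key is to exploit the rigidity that every pencil of $\cL^*$ through a point of $l_1$ carries the \emph{same} slope set $M$ — a feature of the Cartesian product $A\times B$, not shared by a generic configuration — and use it to construct an embedding with $|D_0|\approx D$, after which the asymmetric strength of Theorem~\ref{energythm2} beats the symmetric Rudnev–Shkredov Cartesian-product bound and delivers the improved exponent.
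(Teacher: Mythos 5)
Your proposal diverges from the paper's argument in a way that breaks the proof. The paper's strategy is: apply a projective map sending $l_1\to l_\infty$ and $l_\infty\to l_y$, so that $P'=\pi(A\times B)$ becomes the intersection points $\bigl(\tfrac{\alpha}{a-b},\tfrac{\alpha a}{a-b}\bigr)$ of two pencils; complete the non-axis-parallel part of $L(P')$ to a Cartesian product $S\times Y$ of slopes and intercepts; and then \emph{dualise}, rewriting $I(P',S\times Y)=I(S\times Y,L')$ so that $S\times Y$ becomes the \emph{point} set in Theorem \ref{mishailya8} and the lines $L'$ are the duals $\bigl(\tfrac{-\alpha}{a-b},\tfrac{\alpha a}{a-b}\bigr)$ of the transformed points. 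It is $L'$ --- not the completed line set --- that has the special form of Theorem \ref{energythm1} (not Theorem \ref{energythm2}; you have the pairing of energy theorems to applications reversed), giving $E(L')\ll(|A||B|)^{5/2}$, which is essentially optimal for $|A||B|$ lines. You skip the duality entirely: you keep (a shear of) $A\times B$ as the point set and apply the energy machinery to the completed line family $\cL^{*'}=M'\times T$. This forces you to invoke a ``skew-grid generalisation'' of Theorem \ref{mishailya8} that the paper advertises but never proves or uses, and it puts the energy bound on the wrong object.

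Even granting every estimate you assert, the numbers do not close. Your $\cL^{*'}$ is a genuine Cartesian product of $DJ$ lines, so its energy is governed by \eqref{RSEnergy}, namely $E(\cL^{*'})\ll D^{5/2}J^3+D^3J^2$ --- which is exactly the bound you state, so the ``rigidity embedding'' into Theorem \ref{energythm2} with a $d$-set of size $D$ buys nothing (and cannot exist: writing $(m',t)=(d(t-\lambda)-\mu,t)$ forces $d=(m'+\mu)/(t-\lambda)$, which ranges over up to $DJ$ values). Feeding $E\ll D^{5/2}J^3$ and $|\cL^{*'}|=DJ$ into the incidence bound gives
\begin{equation*}
|A|^2|B|^2\ll|B|^{1/2}|A|^{2/3}D^{3/4}J^{5/6},
\end{equation*}
hence $D^{9}J^{10}\gg|A|^{16}|B|^{18}$ and $\max\{D,J\}\gg|A|^{16/19}|B|^{18/19}$. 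For $|A|=|B|=N$ this is $N^{34/19}$, weaker even than the trivial consequence $\max\{D,J\}\geq(DJ)^{1/2}\gg N^2$ of Beck's theorem, and nowhere near the claimed $(|A||B|)^{15/14}$. The loss is structural: the exponent gain in the paper comes from the energy of the $|A||B|$ dual lines being as small as possible ($n^{5/2}$ for $n$ lines), whereas the energy of your completed family $M'\times T$ exceeds $(DJ)^{5/2}$ by a factor of $J^{1/2}$. You need to reverse the roles of points and lines before applying Theorem \ref{mishailya8}, and apply Theorem \ref{energythm1} to the dual line family arising from the pencil-intersection form of $\pi(A\times B)$.
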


In fact, we are not aware of a single example of a point set $P=A \times B$ (with $A$ and $B$ having comparable size) and an affine line $l$ such that $|L(P) \cap l|=\Theta(|P|)$, and it is plausible that there is some $\epsilon>0$ such that the bound
\[
|L(P) \cap l| \gg |P|^{1+\epsilon}
\]
is always true in such a case.

If Conjecture \ref{conj} is to believed, an obvious question to ask is how large can $\epsilon$ be? Note that, if $P=A \times B$ with $A=\{1,2,\dots, N^{1-\delta}\}$ and $B=\{1,2,\dots,N^{\delta}\}$ then $L(P)$ intersects the line at infinity in $\Theta(N)$ points and the $y$-axis in $O(N^{1+\delta})$ points, which implies that $\epsilon$ cannot be taken to be larger than $\delta$.
 
 \subsection{Proof of Theorem \ref{pencilstheoremaxes} and Theorem \ref{expander}}

\begin{proof}[Proof of Theorem \ref{pencilstheoremaxes}]

WLOG we may assume that $l_1$ is a horizontal line, by reflecting the plane in the line $y =x$ if needed. We shall apply a projective transformation $\pi$ to the plane, sending the line $l_1$ to $l_{\infty}$, and sending the line $l_2$ to the $y$ axis. We do this by defining the image of three points.
\begin{align*}
    \pi([1;0;0]) &= [1;0;0] \quad \text{Horizontal direction is sent to itself.} \\
    \pi(l_1 \cap l_2) &= [0;1;0]  \quad \text{Intersection of $l_1$ and $l_2$ sent to the vertical direction.}\\
    \pi(l_2 \cap l_{\infty}) &= [0;0;1] \quad \text{Slope of $l_2$ is sent to the origin.}
\end{align*}
Note that the three points being mapped above are distinct; $l_1$ and $l_2$ intersect at an affine point, and $l_2$ is not horizontal.

We shall now assume that $l_2$ is not vertical; the case where $l_2$ is vertical is slightly simpler, and we therefore omit it. Assuming the line $l_1$ has equation $y = \alpha$, and $l_2$ has equation $y = \beta x + \gamma$, the projective transformation takes the form
$$\pi = \begin{pmatrix} 1 & \frac{-1}{\beta} & \frac{\gamma}{\beta} \\ 0 & 0 & 1 \\ 0 & \frac{1}{\beta} & \frac{-\alpha}{\beta}
\end{pmatrix}.$$
After the application of $\pi$, the point set $P$ is sent to the new point set $P' := \pi(P)$. As projective transformations preserve incidence structure, it follows that
\[
|L(P') \cap l_{\infty}|=|L(P) \cap l_1|, \,\,\,\,\,\,\,\,\,\,\, |L(P') \cap l_{y}|=|L(P) \cap l_2|.
\]
Therefore, we seek to bound
\[
\max \{|L(P') \cap l_{\infty}| , |L(P') \cap l_y|\}.
\]
Our first task is to find the form of points in $P'$. As the original point set $P$ was given by the intersection of two pencils of lines (one vertical and one horizontal pencil), the points in $P'$ are also given by the intersection of two pencils. From the way we defined $\pi$, we see that the horizontal pencil of lines of the form $y = b$ are sent to horizontal lines of the form $y = \frac{\beta}{b - \alpha}$. Lines of the form $x = a$ in the vertical pencil are sent to lines of the form $y = \frac{(x+1)\beta}{a\beta + \gamma - \alpha}$.\footnote{We may need one element from $A$ and $B$ respectively in order to avoid dividing by zero. This can be done at the outset of the proof and the details are omitted.}  This can be checked by calculating the images of such lines under $\pi$. The point set $P'$ is therefore given by the intersection points of the two pencils of lines $$y = \frac{(x+1)\beta}{a\beta + \gamma - \alpha}, \quad y = \frac{\beta}{b - \alpha} \quad a \in A, b \in B.$$
Therefore,
$$P' = \left\{ \left( \frac{a\beta + \gamma - \alpha}{b - \alpha} - 1, \frac{\beta}{b - \alpha} \right) : (a,b) \in A \times B \right\}.$$
To simplify this, we define
$$A' = A + \frac{\gamma - \alpha}{\beta} ,\qquad B' = \frac{\beta}{B - \alpha}.$$
In particular we note that $P'$ has the form $(a'b' - 1, b')$ with $(a',b') \in A' \times B'$, and we have $|A| = |A'|$, $|B| = |B'|$.

Define
\[
\cL:=\{l \in L(P') : \text{ $l$ is not horizontal or vertical}\}.
\]
Using the notation of the affine group, each element of $l \in \cL$ can be expressed as a pair $(s,t) \in \mathbb R^* \times \mathbb R$, where $s$ is the slope of $l$ and $t$ is its $y$-axis intercept. We now `complete' $\cL$ into a Cartesian product $S \times Y$, where $S \subset \mathbb R^*$ is the set of slopes defined occuring in $\cL$ and $Y\subset \mathbb R$ is the set of $y$-axis intercepts occuring. Critically, we have
\begin{equation} \label{somefacts}
    |S| \leq  |L(P') \cap l_{\infty}| = |L(P) \cap l_1|, \quad |Y| \leq |L(P') \cap l_y|= |L(P) \cap l_2|.
    \end{equation}
By Beck's theorem, we know that $P'$ defines many lines; $|L(P')| \gg |A|^2|B|^2$. At most $|A||B|+|B|$ of these lines are horizontal or vertical, and so
\[
|\cL|=|L(P')|- |\{ \text{all horizontal or vertical lines in $L(P')$}\}| \gg|A|^2|B|^2.
\]
Since each line in $\cL$ intersects $P'$ in at least two places, we have
\begin{equation} \label{incehalf}
    |A|^2|B|^2 \ll 2|\cL| \leq I(P', \cL) \leq I(P', S \times Y).
\end{equation}
To upper bound the number of incidences $I(P', S \times Y)$, we use the observation that 
$$I(P',S \times Y) = I(S \times Y, L')$$
where in the right hand side of the above equality, $S \times Y$ is a Cartesian product of points, and $L'$ is the set of lines of the form $y = -g_1x + g_2$ such that $(g_1,g_2) \in P'$. To see this, note that an incidence contributing to $I(P', S \times Y)$ has the form $g_2 = sg_1 + t$ for $(g_1,g_2) \in P'$, $(s,t) \in S \times Y$. Writing this as $t = -g_1s + g_2$, we see that it is also an incidence between the point $(s,t) \in S \times Y$, and the line $y = -g_1x + g_2$. 

We shall now use Theorem \ref{mishailya8} to bound $I(S \times Y, L')$. As lines in $L'$ have the form $(-a'b'-1,b')$ with $(a',b') \in A' \times B'$, we are in the scenario to apply Theorem \ref{energythm2}, which gives 
$$E(L') \ll |A|^3 |B|^{5/2} +  |A|^2 |B|^3 \ll |A|^3|B|^{5/2},$$
where the latter inequality uses the assumption that $|B| \leq |A|^2$.
Plugging this into Theorem \ref{mishailya8} and applying \eqref{incehalf} gives
$$|A|^2|B|^2 \ll I(S\times Y, L') \ll |Y|^{1/2}|S|^{2/3}|A|^{5/6}|B|^{3/4} + |Y|^{1/2}|A||B|.$$
If the second term dominates we have $|Y| \gg |A|^2|B|^2$, which is better than required. We may therefore assume the leading term is dominant, giving $|S|^8|Y|^6 \gg |A|^{14}|B|^{15}$ and thus
$$ \max \{|S|, |Y|\} \gg |A||B|^{15/14}.$$
Recalling \eqref{somefacts}, the proof is complete.
\end{proof}

Theorem \ref{expander} now follows as an immediate corollary.

\begin{proof}[Proof of Theorem \ref{expander}]

Define
$$Q[A] := \left\{ \frac{a_1a_4 - a_2a_3}{a_1-a_3} : a_i \in A \right\}.$$
Let $P= A \times A$, and consider the line set $L(P)$. The line connecting $(a_1,a_2)$ and $(a_3,a_4)$ with $a_1 \neq a_3$ has equation
$$y = \frac{a_2 - a_4}{a_1 - a_3}x + \frac{a_1a_4 - a_2a_3}{a_1 - a_3}.$$
We now have two observations. Firstly, the size of $Q[A]$ is precisely the number of $y$ intercepts defined by $L(P)$. Secondly, the point set $A \times A$ is symmetric in the line $y=x$, so that the number of $y$ intercepts defined by $L(P)$ is precisely the number of $x$ intercepts defined by $L(P)$. That is,
$$|Q[A]|=|L(P) \cap l_y| = |L(P) \cap l_x| .$$
Theorem \ref{pencilstheoremaxes} therefore implies that
\[
|Q[A]| \gg |A|^{2+\frac{1}{14}}.
\]
\end{proof}

\subsection{Proof of Theorem \ref{pencilstheorem}}


\begin{proof}[Proof of Theorem \ref{pencilstheorem}]
Firstly note that if $|l_1 \cap (A \times B)| \geq 2$, then $l_1 \cap L(P)$ is in fact an infinite set and we are done. So, we can assume that $|l_1 \cap L(P)| \leq 1$. In fact, we make the simplifying assumption that $l_1 \cap (A \times B) =\emptyset$. If this is not the case then we can delete one point from $A \times B$ at the outset and make some small modifications to the forthcoming argument, but we omit these details.

We apply a projective transformation to the plane, sending the line $l_1$ to the line at infinity, and sending $l_{\infty}$ to the $y$ axis. Let $l_1$ be the line $y = \lambda x + \mu$, with $\lambda \neq 0$. We define the images
\begin{align*}
    \pi([1;\lambda ;0]) &= [0;1;0] \quad \text{Slope of $l_1$ is sent to vertical direction.} \\
    \pi([0;\mu;1]) &= [1;0;0]  \quad \text{Intercept of $l_1$ is sent to horizontal direction}\\
    \pi([0;1;0]) &= [0;0;1] \quad \text{Vertical direction is sent to the origin.}
\end{align*}
This gives the projective transformation
$$\pi = \begin{pmatrix} 0 & 0 & 1 \\ 1 & 0 & 0 \\ -\lambda & 1 & -\mu
\end{pmatrix}.$$
Under $\pi$, the Cartesian product $A \times B$ is mapped to the intersection points of two pencils of lines. Write $P'= \pi(P)$. Note that since $l_1 \cap (A \times B) =\emptyset$, $P'$ consists of only affine points. As projective transformations preserve incidence structure, we have
$$|L(P') \cap l_{\infty}| = |L(P) \cap l_1|  , \quad   |L(P') \cap l_{y}| = |L(P) \cap l_{\infty}|.$$
Therefore, as in the proof of Theorem \ref{pencilstheoremaxes}, we seek to bound
\[
\max \{|L(P') \cap l_{\infty}| + |L(P') \cap l_y|\}.
\]
Our first task is to find the form of points in $P'$. By the definition of $\pi$, the vertical direction was sent to the origin. Therefore the pencil corresponding to the set $A$ is now centred at the origin. Call this pencil $P_1$. It contains lines of the form $y = ax$ for $|A|$ values of $a$. The second pencil, call it $P_2$, lies somewhere on the $y$ axis, call it $(0,\alpha)$, and thus contains lines of the form $y-\alpha = bx$ for $|B|$ values of $b$. We do not necessarily have here that $a \in A$ or $b \in B$, only that they come from sets (say $A'$ and $B'$) with $|A'|= |A|$ and $|B'| = |B|$. The intersection points are therefore 
$$P' = \left(\frac{\alpha}{a-b}, \frac{ \alpha a}{a-b} \right), \quad (a,b) \in A' \times B'.$$
All these points are affine (i.e. we do not have $a=b$, since $P'$ contains no points at infinity).


Define
\[
\cL=\{l \in L(P) : \text{ $l$ is not horizontal or vertical}\}.
\]
By Beck's Theorem we have $|L(P')| \gg |A|^2|B|^2$, since there are at most $\max \{ |A|,|B| \}$ collinear points. Furthermore, as in the proof of Theorem \ref{pencilstheoremaxes}, the number of horizontal and vertical lines in $L(P')$ is negligible, and so
\[
|\cL| \gg |A|^2|B|^2.
\]
Let $S$ denote the set of slopes of lines present in $\cL$, and let $Y$ denote the set of $y$ intercepts present in $\cL$. We complete the set $\cL$ into the Cartesian product $S \times Y$, of lines of the form $y =sx + t$, $(s,t) \in S \times Y \subset \mathbb R^* \times \mathbb R^*$. Firstly, note that we have
$$|A|^2|B|^2 \ll 2|\cL|\leq I(P', \cL) \leq I(P', S \times Y)$$
Secondly, note that 
\begin{equation}\label{somefacts2}
|Y| = |\cL \cap l_y| \leq |L(P') \cap l_y|, \quad |S| = |\cL \cap l_{\infty}| \leq |L(P') \cap l_{\infty}|.
\end{equation}
Consider an incidence contributing to $I(P',S \times Y)$. This corresponds to a point $(p_1,p_2) \in P'$ lying on the line $y = sx + t$, so that $p_2 = sp_1 + t$. By the simple rearrangement $t = -p_1 s + p_2$, we see that the \emph{point} $(s,t) $ lies on the \emph{line} $y = -p_1x + p_2$. From this observation, we have
$$I(P',S \times Y) = I(S \times Y, L')$$
where $L'$ is the set of lines of the form $y = -p_1x + p_2$ such that $(p_1,p_2) \in P'$, and $S \times Y$ is a Cartesian product of points. The lines $L'$ are therefore of the form $\left(\frac{-\alpha}{a-b}, \frac{ \alpha a}{a-b} \right)$. The energy of $L'$ can then be bounded by Theorem \ref{energythm1} as
$$E(L') \ll |A|^{5/2}|B|^{5/2} + |A|^4 + |B|^4.$$
The condition $|A|^{5/3} \geq |B| \geq |A|^{3/5}$ ensures that the leading term above dominates. Plugging this energy bound into Theorem \ref{mishailya8} yields
$$|A|^2|B|^2 \ll I(S \times Y, L') \ll |S|^{2/3}|Y|^{1/2}|A|^{3/4}|B|^{3/4} + |Y|^{1/2}|A||B|.$$
If the second term dominates, we get $|Y| \gg |A|^2|B|^2$, better than needed. We then assume the leading term dominates, which upon rearrangement gives
$$|S|^8|Y|^6 \gg |A|^{15}|B|^{15}.$$
Recalling \eqref{somefacts2}, the proof is complete.
\end{proof}

\section{Asymmetric `few sums many products' problem} \label{sec:FSMP}

The purpose of this section is to prove Theorem \ref{thm:FSMP}. We will need a small improvement on the bound \eqref{RSEnergy}, for the energy of lines of the form $(a,ab)$. In fact, such a result is already provided in \cite[Lemma 21]{RuSh}, which we state below.
\begin{Theorem} \label{thm:RSEnergyBetter}
For all $\kappa>0$ there exists $\delta=\delta(\kappa)>0$ such that, for all $C,D \subset \mathbb R^*$ with $|D|^{\kappa} \leq|C|\leq |D|^2$, the set of lines
\[
\cL=\{(c,cd) : (c,d) \in C \times D \} \subset \text{Aff}(\mathbb R)
\]
satisfies the bound
\[
E(\cL) \ll |C|^{\frac{5}{2}-\delta}|D|^3.
\]
\end{Theorem}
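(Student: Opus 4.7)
The plan is to translate $E(\cL)$ into a mixed additive--multiplicative counting problem and then extract a power saving from a sum-product dichotomy. In the affine group $(c,cd)^{-1}=(1/c,-d)$, and a direct calculation shows that $l_1^{-1}l_2=l_3^{-1}l_4$ is equivalent to $c_2/c_1=c_4/c_3=:\lambda$ together with $\lambda(d_2-d_4)=d_1-d_3$. This yields the clean representation
\[
E(\cL)=\sum_{\lambda \in C/C} r_{C/C}(\lambda)^2\,f(\lambda),\qquad f(\lambda):=\bigl|\{(d_1,d_2,d_3,d_4)\in D^4:\ d_1-d_3=\lambda(d_2-d_4)\}\bigr|.
\]

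First I would recover the Rudnev--Shkredov bound \eqref{RSEnergy} from this representation using only the trivial estimate $f(\lambda)\le|D|^3$ combined with Szemer\'edi--Trotter applied (via Corollary \ref{krich}) to the rich multiplicative quotients $\{\lambda : r_{C/C}(\lambda)\ge\Delta\}$, viewed as slopes of lines through the origin meeting $C\times C$. The key observation is that the resulting bound $|C|^{5/2}|D|^3$ is saturated only when $r_{C/C}(\lambda)$ sits at its extremal Szemer\'edi--Trotter size on a set of ratios $\Lambda$ where, simultaneously, $f(\lambda)\approx|D|^3$. The strategy is therefore to dyadically decompose, fixing $r_{C/C}(\lambda)\approx\Delta$ and $f(\lambda)\approx\Phi$ on a set $\Lambda$ of size $N$, so that the main contribution is $N\Delta^2\Phi$ up to logarithms, and then to rule out the extremal parameter choice.

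For this worst case, on the multiplicative side the tightness of Szemer\'edi--Trotter for rich quotients of $C$ forces $C$, after a Balog--Szemer\'edi--Gowers refinement, to contain a large subset $C'$ with a small ratio set and with $\Lambda\subset C'/C'$. On the additive side, $f(\lambda)\approx|D|^3$ means $D-D$ has large correlation with $\lambda(D-D)$ simultaneously for many $\lambda\in\Lambda$, forcing $D$ to carry a non-trivial approximate additive structure that is stable under the dilations in $\Lambda$. These two conditions --- a multiplicatively structured $C'$ and an additively structured $D$ whose structure is preserved under $C'/C'$ --- are incompatible in a sum-product sense, and one applies a Bourgain--Chang / Konyagin--Shkredov style higher-moment sum-product estimate to $C'$ (or equivalently to the configuration $(C',D)$) to extract a power saving and close the argument. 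The hypothesis $|D|^{\kappa}\le|C|\le|D|^2$ is used to ensure that $C$ and $D$ live on comparable polynomial scales, so that the sum-product increment genuinely translates into an energy saving in both variables.

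The main obstacle is quantitative: the Bourgain--Chang-type saving in $\delta$ depends on $\kappa$ only very weakly, and one must track it carefully through the BSG refinement and the dyadic decomposition so that it survives as a true power saving $|C|^{-\delta}$ in the final bound. This is essentially the content of \cite[Lemma 21]{RuSh}, which furnishes exactly the estimate $E(\cL)\ll|C|^{5/2-\delta}|D|^3$ claimed here.
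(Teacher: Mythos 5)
Your fallback conclusion matches the paper exactly: the paper does not prove this statement at all, but quotes it verbatim from \cite[Lemma 21]{RuSh}, remarking only that the proof there uses growth estimates in the affine group together with Shkredov's structural result \cite{Sh2} for sets whose second and third moment energies are in a critical case. Since you also ultimately defer to \cite[Lemma 21]{RuSh}, at that level your proposal is consistent with the paper.

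However, the sketch you offer as a would-be proof has a genuine gap, so it should not be read as more than a heuristic. Your opening algebra is fine: $(c,cd)^{-1}=(1/c,-d)$ and $E(\cL)=\sum_{\lambda}r_{C/C}(\lambda)^2 f(\lambda)$ with $f(\lambda)=|\{d_1-d_3=\lambda(d_2-d_4)\}|$ is correct. But the next step fails: with the trivial bound $f(\lambda)\le|D|^3$ you are left with $|D|^3\sum_{\lambda}r_{C/C}(\lambda)^2$, i.e.\ $|D|^3$ times the multiplicative energy of $C$, and Szemer\'edi--Trotter (Corollary \ref{krich}) gives nothing useful here because all the relevant lines pass through the origin, where the trivial pencil bound (at most $|C|^2/k$ lines that are $k$-rich) already beats the $|C|^4/k^3$ term. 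For a geometric progression $C$ the multiplicative energy is of order $|C|^3$, so this route only yields $|C|^3|D|^3$, not $|C|^{5/2}|D|^3$; the saving in \eqref{RSEnergy} has to come from treating the $D$-side equation nontrivially (a collinearity/incidence count in the $d$-variables), not from the $C$-side quotients. Consequently your characterisation of the ``saturated'' case, and the BSG plus Bourgain--Chang-style dichotomy built on it, are not grounded as stated; and this is in any case not the mechanism of \cite[Lemma 21]{RuSh}, whose proof goes through growth in $\text{Aff}(\R)$ and Shkredov's higher-energy structural lemma. If your intention is simply to invoke Lemma 21, say only that (as the paper does); if you intend the sketch as an independent proof, the first reduction already breaks.
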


The proof of Theorem \ref{thm:RSEnergyBetter} is significantly more difficult than that of \eqref{RSEnergy}, utilising bounds on growth in the affine group proved elsewhere in \cite{RuSh}, as well as an additive combinatorial tool due to Shkredov \cite{Sh2} which gives structural information for a set when its second moment and third moment energy are in a particular `critical case'.

Theorem \ref{thm:RSEnergyBetter} can then be combined with Theorem \ref{mishailya8} to give the following improvement to \eqref{RSgrids}.

\begin{Theorem} \label{thm:GridsBetter}
For all $\kappa>0$ there exists $\delta=\delta(\kappa)>0$ such that, for all $C,D \subset \mathbb R^*$ with $|D|^{\kappa} \leq|C|\leq |D|^2$, the set of lines
\[
\cL=\{(c,cd) : (c,d) \in C \times D \} \subset \text{Aff}(\mathbb R)
\]
satisfies the bound
\[   I(A \times B, \cL) \ll |A|^{2/3}|B|^{1/2}|C|^{3/4-\delta}|D|^{5/6}+|B|^{1/2}|C||D|
\]
for any $A,B \subset \mathbb R$.
\end{Theorem}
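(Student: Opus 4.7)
The proof is essentially a direct substitution of the improved energy bound from Theorem \ref{thm:RSEnergyBetter} into the general incidence theorem of Theorem \ref{mishailya8}, so the proposal is quite short.

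First I would fix $\kappa>0$, apply Theorem \ref{thm:RSEnergyBetter} to obtain $\delta_0=\delta_0(\kappa)>0$ such that, under the hypothesis $|D|^\kappa \leq |C| \leq |D|^2$, the energy of the line set $\cL=\{(c,cd):(c,d) \in C \times D\}$ satisfies
\[
E(\cL) \ll |C|^{5/2-\delta_0}|D|^3.
\]
I would also record that $|\cL|=|C||D|$, since the map $(c,d) \mapsto (c,cd)$ is injective on $C \times D$ when $C \subset \mathbb{R}^*$.

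Next I would apply Theorem \ref{mishailya8} to the point set $A\times B$ and the line set $\cL$, which yields
\[
I(A\times B,\cL) \ll |B|^{1/2}|A|^{2/3}E(\cL)^{1/6}|\cL|^{1/3}+|B|^{1/2}|\cL|.
\]
Substituting the energy bound and $|\cL|=|C||D|$, the first term becomes
\[
|B|^{1/2}|A|^{2/3}\bigl(|C|^{5/2-\delta_0}|D|^3\bigr)^{1/6}\bigl(|C||D|\bigr)^{1/3} = |A|^{2/3}|B|^{1/2}|C|^{5/12+1/3-\delta_0/6}|D|^{1/2+1/3},
\]
which simplifies to $|A|^{2/3}|B|^{1/2}|C|^{3/4-\delta_0/6}|D|^{5/6}$. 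The second term evaluates directly to $|B|^{1/2}|C||D|$. Setting $\delta:=\delta_0/6$ then gives the claimed bound.

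Since all of the difficulty is hidden inside the invocation of Theorem \ref{thm:RSEnergyBetter}, there is no real obstacle here; the only step requiring a small comment is noting that $|\cL|=|C||D|$ (using $C \subset \mathbb{R}^*$ so that the parameterisation is injective), and that $\delta$ must be rescaled to absorb the exponent $1/6$ coming from $E(\cL)^{1/6}$. I would write the proof in essentially one short paragraph, referencing Theorems \ref{thm:RSEnergyBetter} and \ref{mishailya8} and performing the arithmetic above.
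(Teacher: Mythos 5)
Your proposal is correct and is exactly the argument the paper intends (the paper states Theorem \ref{thm:GridsBetter} as an immediate combination of Theorem \ref{thm:RSEnergyBetter} with Theorem \ref{mishailya8} without writing out the substitution). The arithmetic checks out, and your two small observations --- that $|\cL|=|C||D|$ by injectivity of $(c,d)\mapsto(c,cd)$ on $C\times D$ with $C\subset\mathbb R^*$, and that $\delta$ is rescaled by the factor $1/6$ from $E(\cL)^{1/6}$ --- are precisely the points worth recording.
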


We will also need to know that sets with small sum set have superquadratic sized triple product sets. The precise statement we use is \cite[Theorem 3.2]{RNS}, stated below.

\begin{Theorem} \label{thm:FSMTP}
There is an absolute constant $C>0$ such that, for any $A \subset \mathbb R$ and $|A+A|=K|A|$ we have
\[
|AAA| \gg \frac{|A|^{2+\frac{1}{392}}}{K^{\frac{125}{56}}(\log|A|)^C}.
\]
\end{Theorem}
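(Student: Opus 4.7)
The plan is to reduce the lower bound on $|AAA|$ to an upper bound on a third-order multiplicative energy of $A$, and then bound this energy using a point-line incidence argument in which the few-sums hypothesis $|A+A|=K|A|$ enters through Pl\"unnecke-Ruzsa. The first step is a standard Cauchy-Schwarz applied to the representation function $r(x) = |\{(a,b,c) \in A^3 : abc = x\}|$, which is supported on $AAA$ and satisfies $\sum_x r(x) = |A|^3$; this gives
\[
|AAA| \geq \frac{|A|^6}{T^*_3(A)}, \qquad T^*_3(A):=|\{(a_1,\ldots,a_6) \in A^6 : a_1a_2a_3=a_4a_5a_6\}|,
\]
so it suffices to prove $T^*_3(A) \ll K^{125/56}|A|^{4-1/392}(\log|A|)^{C}$. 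A dyadic pigeonhole on $r(x)$ then reduces matters to a popular-level version of $T^*_3$, and the equation $a_1a_2a_3 = a_4a_5a_6$, rewritten as $a_1/a_4 = (a_5a_6)/(a_2a_3)$, expresses this popular energy as an incidence count between a point set supported in $A/A$ and a family of lines whose slope-intercept data are built from products of elements of $A$.

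The second step is to bring the additive hypothesis to bear. The natural device is to shift one of the variables by an element of $A$, for instance writing $a_1 = (a_1+s)-s$, so that after re-expansion a piece of $T^*_3$ can be realised as incidences between a Cartesian product whose first coordinate lives in $A+A$ and a family of affine-group lines of the shape $\{(a,ab)\}$. One can then invoke Theorem \ref{mishailya8} together with the Rudnev-Shkredov line-energy estimate \eqref{RSEnergy} (or, where quantitatively helpful, its refinement Theorem \ref{thm:GridsBetter}) to bound this incidence count. Every remaining combinatorial quantity has the form $|sA-tA|$ and is controlled by Pl\"unnecke-Ruzsa, $|sA-tA| \ll K^{s+t}|A|$, which is precisely the mechanism producing the $K^{125/56}$ factor in the denominator.

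The main obstacle I expect is the delicate bookkeeping required to arrive at the specific exponents $1/392$ and $125/56$: each Pl\"unnecke-Ruzsa invocation adds to the power of $K$, each dyadic decomposition costs a logarithmic factor, and the fractional gain $1/392$ only emerges after carefully optimising the pigeonhole parameter and balancing the two error terms in Theorem \ref{mishailya8} against those in \eqref{RSEnergy}. A secondary subtlety is that bounding only the classical second-moment multiplicative energy $E^*(A)$ would be too weak; one genuinely needs the third-order quantity $T^*_3(A)$, which is why the full affine-group line-energy machinery of \cite{RuSh} seems necessary, rather than a simpler Szemer\'edi-Trotter-based argument in the spirit of Elekes.
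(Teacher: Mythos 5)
First, a point of orientation: the paper does not prove Theorem \ref{thm:FSMTP} at all --- it is imported verbatim as \cite[Theorem 3.2]{RNS}, and the specific exponents $\frac{1}{392}$ and $\frac{125}{56}$ are the output of the argument in that external paper (which runs through multiplicative-energy and Shkredov-type higher-energy machinery, in the spirit of the tools used in Section \ref{sec:other} here), not of anything carried out in the present article. So your attempt is necessarily being compared with that proof, and it does not reproduce it, nor does it constitute an independent proof.

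The genuine gap is that, after the correct Cauchy--Schwarz reduction $|AAA| \geq |A|^6/T^*_3(A)$, the required estimate $T^*_3(A) \ll K^{125/56}|A|^{4-1/392}(\log|A|)^{C}$ is merely restated as a goal and never established; everything that would actually constitute the proof is deferred to ``delicate bookkeeping''. Two of the sketched steps are moreover doubtful as described. The device for injecting the hypothesis $|A+A|=K|A|$ into the purely multiplicative equation $a_1a_2a_3=a_4a_5a_6$ --- writing $a_1=(a_1+s)-s$ --- is not shown to produce the claimed configuration: expanding gives a difference of two triple products, not incidences between a point set whose first coordinate lies in $A+A$ and lines of the form $(c,cd)$, so it is not clear that Theorem \ref{mishailya8} with \eqref{RSEnergy} or Theorem \ref{thm:GridsBetter} ever becomes applicable. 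And the claimed provenance of the factor $K^{125/56}$ (Pl\"unnecke--Ruzsa applied to quantities $|sA-tA|$) cannot be right as stated, since each Pl\"unnecke application contributes an integer power of $K$; the fractional exponents can only emerge from a balancing/optimisation step that the proposal never performs. Note also that the truth of the theorem does not by itself guarantee that $T^*_3(A)$ is as small as your reduction needs, so even the choice of this intermediate quantity would have to be justified. As it stands this is a plausible-sounding plan rather than a proof, and the plan diverges from the argument actually underlying the cited result.
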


We are now ready to begin the proof of Theorem \ref{thm:FSMP}, which we restate below for convenience.

\begin{Theorem**} 

For all $\kappa>0$ there exists $k=k(\kappa)>0$ such that for all $A,B \subset \mathbb R$ with $|A|^{\kappa} \leq |B|$ and writing $|A+A|=K|A|$, we have
\[ |AB| \gg \frac{|A||B|^{1/2+k}}{K^{4/3}}.
\]
\end{Theorem**}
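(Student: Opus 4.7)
Our strategy is an Elekes-type incidence argument in which $A+A$ appears as one coordinate of the point set. Write $|A+A| = K|A|$, and (removing zeros if necessary) assume $0 \notin A \cup B$. Consider the family of lines
\[
\cL = \{y = b(x-a) : (b,a) \in B \times A\}.
\]
In affine group coordinates the line $y = b(x-a)$ corresponds to $(b,-ba)$, which is of the form $(c,cd)$ with $c = b$ and $d = -a$, so Theorem \ref{thm:GridsBetter} applies with $C = B$ and $D = -A$. Take the point set $P = (A+A) \times AB$. For each triple $(a_1, b, a_2) \in A \times B \times A$, the point $(a_1+a_2, ba_1) \in P$ lies on the line $y = b(x - a_2)$ since $b(a_1+a_2-a_2) = ba_1$, and distinct triples produce distinct (point, line) incidences; hence $I(P,\cL) \geq |A|^2|B|$.

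Suppose first that $|A|^\kappa \leq |B| \leq |A|^2$, so the hypothesis of Theorem \ref{thm:GridsBetter} is satisfied. Then
\[
|A|^2|B| \leq I(P,\cL) \ll (K|A|)^{2/3}|AB|^{1/2}|B|^{3/4-\delta}|A|^{5/6} + |AB|^{1/2}|A||B|.
\]
If the main (first) term dominates, rearranging yields $|AB| \gg |A||B|^{1/2+2\delta}/K^{4/3}$, giving the theorem with $k = 2\delta$. If instead the secondary term dominates, we obtain $|AB| \gg |A|^2$, which easily exceeds the target in this regime. The conceptual point of the setup is that placing $A+A$ as the first coordinate of the point set introduces a factor $(K|A|)^{2/3}$ on the right-hand side of the incidence bound; after rearrangement (noting that $|AB|$ appears only to the $1/2$ power) this becomes the factor $K^{4/3}$ in the final lower bound on $|AB|$.

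For the remaining range $|B| > |A|^2$, Theorem \ref{thm:GridsBetter} does not apply directly, so we instead combine Theorem \ref{thm:FSMTP} with the multiplicative Plünnecke--Ruzsa inequality $|AAA|\cdot|B|^2 \leq |AB|^3$ to obtain $|AB| \gg |A|^{2/3+1/1176}|B|^{2/3}/K^{125/168}$, and verify by a routine comparison of exponents that this exceeds the target $|A||B|^{1/2+k}/K^{4/3}$ whenever $|B| \geq |A|^2$, provided $k = k(\kappa)$ is taken small enough. The only non-routine step in the whole argument is spotting the incidence configuration of the first paragraph; once the point set $(A+A) \times AB$ is chosen, the rest is a direct application of the improved incidence and triple-product bounds.
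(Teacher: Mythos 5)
Your setup is identical to the paper's: the same line family, the same point set $(A+A)\times AB$, Theorem \ref{thm:GridsBetter} for $|B|$ up to about $|A|^2$, and Theorem \ref{thm:FSMTP} combined with Pl\"unnecke--Ruzsa for larger $B$. The gap is in how you dispose of the second term of the incidence bound. You claim that if $|AB|^{1/2}|A||B|$ dominates then $|AB|\gg|A|^2$, which ``easily exceeds the target in this regime.'' It does not. The ratio of the second term to the first is $|B|^{1/4+\delta}K^{-2/3}|A|^{-1/2}$, so the second term dominates precisely when $K^{4/3}\ll |B|^{1/2+2\delta}|A|^{-1}$; substituting this into the target gives
\[
\frac{|A||B|^{1/2+k}}{K^{4/3}}\gg |A|^{2}|B|^{k-2\delta},
\]
which for your choice $k=2\delta$ is at least $|A|^2$. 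Concretely, take $K=O(1)$ and $|B|=|A|^{2-\epsilon}$: the second term dominates whenever $\epsilon< 8\delta/(1+4\delta)$, while the target strictly exceeds $|A|^2$ whenever $\epsilon<4k/(1+2k)$. These two windows overlap no matter how small you take $k>0$, so the regime where $|B|$ is just below $|A|^2$ and $K$ is bounded is genuinely not covered by your argument; it cannot be repaired by shrinking $k$.

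The paper avoids this by cutting the incidence case off at $|B|\leq|A|^{2-c_0}$ with $c_0=\min\{8\delta/(1+4\delta),\,1/392\}$: in that range the second term is provably dominated by the first (using only $K\geq 1$), so no dichotomy is needed, and the $AAA$/Pl\"unnecke argument is then run on the whole range $|A|^{2-c_0}\leq|B|\leq|A|^3$, with $|B|\geq|A|^3$ trivial. Your treatment of $|B|>|A|^2$ is essentially correct -- the exponent comparison works for all such $B$ once $k<1/2352$, after absorbing the logarithm into a small power of $|A|$ -- so the repair is simply to move the crossover point down from $|A|^2$ to $|A|^{2-c_0}$ and redo that same comparison using $|B|\geq|A|^{2-c_0}$ in place of $|B|\geq|A|^2$, exactly as the paper does.
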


\begin{proof}
Fix $\kappa >0$ and let $\delta=\delta(\kappa)$ be the value given by the statement of Theorem \ref{thm:GridsBetter}. Firstly, let us assume that $|A|^{\kappa} \leq|B|\leq |A|^{2-c_0}$, where $c_0=\min\{\frac{8\delta}{1+4\delta}, \frac{1}{392}\}$. For this case, we can use a simple Elekes-type argument in combination with Theorem \ref{thm:GridsBetter}.

Let $\cL$ be the set of all lines of the form $y=c(x-d)$ with $c \in B$ and $d \in A$. Using the notation of $\text{Aff}(\mathbb R)$, this is the set of all lines of the form $(c,-cd)$. Define $\cP=(A+A) \times (AB)$. Observe that, for each $a \in A$ and for any $(c,d) \in A \times B$ we have an incidence between the point $(a+d,cd)$ and the line $(c,-cd)$. Applying this observation and Theorem \ref{thm:GridsBetter} yields
\begin{align*}
|A|^2|B|\leq I(\cP, \cL) &\ll |A+A|^{2/3}|AB|^{1/2}|B|^{3/4-\delta}|A|^{5/6}+|AB|^{1/2}|B||A|
\\& \ll |A+A|^{2/3}|AB|^{1/2}|B|^{3/4-\delta}|A|^{5/6},
\end{align*}
where the latter inequality uses the assumption that $|B|\leq |A|^{\frac{2}{1+4\delta}}$. Writing $|A+A|=K|A|$ and rearranging gives
\begin{equation} \label{conc1}
|AB| \gg \frac{|A||B|^{\frac{1}{2}+2\delta}}{K^{\frac{4}{3}}}.
\end{equation}

Note also that Theorem \ref{thm:FSMP} is true for trivial reason when $|B| \geq |A|^3$. This is simply because, for any sets $A$ and $B$ with this property
\[
|AB| \geq |B| \geq |A||B|^{\frac{2}{3}}.
\]

Now we turn to the case when $ |A|^{2-c_0} \leq |B| \leq |A|^3$. Applying Theorem \ref{thm:FSMTP} and the Pl\"{u}nnecke-Ruzsa Theorem, gives
\[
\frac{|AB|^3}{|B|^2} \geq |AAA| \gg \frac{|A|^{2+\frac{1}{392}}}{K^{\frac{125}{56}}(\log|A|)^C}.
\]
Rearranging and using the bound $|B| \geq |A|^{2-c_0}$, it follows that
\[
|AB| \gg \frac{|B|^{\frac{2}{3}}|A|^{\frac{2}{3}+ \frac{1}{1176}}}{K^{\frac{125}{168}} (\log|A|)^C} 
\gg \frac{|B|^{\frac{1}{2}}|A|^{1+ \frac{1}{1176}-\frac{c_0}{6} }}{K^{\frac{125}{168}} (\log|A|)^C} 
\geq \frac{|B|^{\frac{1}{2}}|A|^{1+ \frac{1}{2352}}}{K^{\frac{125}{168}} (\log|A|)^C}
\gg \frac{|B|^{\frac{1}{2}}|A|^{1+ \frac{1}{2500}}}{K^{\frac{4}{3}} }.
\]
In the penultimate inequality above we have used the fact that $c_0 \leq \frac{1}{392}$, while the last step is trivially true (for $A$ sufficiently large) and is carried out only to simplify the expression. Finally, since $|A| \geq |B|^{1/3}$, we conclude that
\begin{equation} \label{conc2}
|AB| \gg \frac{|B|^{\frac{1}{2}+\frac{1}{7500}}|A|}{K^{\frac{4}{3}} }.
\end{equation}
Examining inequalities \eqref{conc1} and \eqref{conc2}, we see that the proof is complete by taking $k=\min \{2\delta, \frac{1}{7500}\}$.

\end{proof}

To conclude this section, we remark that the arguments from proof of Theorem \ref{thm:RSEnergyBetter} can be applied to bound the energy of lines of the form $(cd,c)$ thus giving an analogous small improvement to Theorem \ref{energythm2}. One can then repeat some arguments from the proof of Theorem \ref{thm:FSMP} above but using this improved energy bound in order to get a threshold breaking sum-product type result of the following form:
\[
|AA| \leq K|A| \Rightarrow |(A+1)B| \gg_K|A||B|^{\frac{1}{2}+c}.
\]
However, this result does not apply in the endpoint case when the size of $B$ is very close to $|A|^2$, since we do not have a suitable analogue of Theorem \ref{thm:FSMTP} for this problem. It may be an interesting research problem to prove such a result, which take the form of the following superquadratic growth statement:
\[
|AA| \leq K|A| \Rightarrow |(A+1)(A+1)(A+1)| \gg |A|^{2+c}.
\]

\section{Further Expander Results} \label{sec:other}
In this section we explore some further implications of the new techniques introduced in \cite{RuSh} and developed further in this paper. 

In the following, the symbol $\lesssim$ is used to absorb powers of $\log|A|$. That is, $X \lesssim Y$ if there are absolute constants $c,d > 0$ such that $X \leq c Y (\log|A|)^d$. 

\subsection{The size of $AA+A$}
Our first application concerns the size of the set $AA+A$, which is perhaps the most obvious set one can construct by a combination of additive and multiplicative operations. The `threshold' bound $|AA+A| \gg |A|^{3/2}$ follows from a simple application of the Szemer\'{e}di-Trotter Theorem. In \cite{RNRSS}, a fairly involved argument based upon the geometric setup of Solymosi \cite{So} was used to give the small improvement $|AA+A| \gg |A|^{3/2+c}$, with $c=2^{-222}$. This argument required the restriction that $A$ consists only of positive reals.

In \cite{RuSh}, the authors observed that they could remove this restriction and implicitly give a better value of $c$, although they did not calculate $c$ explicitly. They also obtained an improvement to the corresponding threshold energy bound.

The following result gives the first `reasonable' explicit bound for this set.

\begin{Theorem}
Let $A \subset \R$ be finite. Then
$$|AA+A| \gtrsim |A|^{3/2 + 1/194}.$$
\end{Theorem}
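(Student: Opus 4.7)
The plan is to refine the Elekes-style incidence argument that yields the threshold bound $|AA+A| \gg |A|^{3/2}$. Set $S = AA+A$ and take $\cP = A \times S$, together with the line set $\cL = \{(a,b) : a,b \in A\} \subset \text{Aff}(\R)$ encoding the lines $y = ax+b$, so that $|\cL| = |A|^2$. For every $(a,b,c) \in A^3$ the point $(c, ac+b)$ lies in $\cP$ and on the line $(a,b)$, giving $I(\cP,\cL) \geq |A|^3$. A plain application of Szemer\'{e}di--Trotter, or equivalently inserting the Rudnev--Shkredov estimate $E(\cL) \ll |A|^{11/2}$ from \eqref{RSEnergy} into Theorem \ref{mishailya8}, recovers only $|S| \gg |A|^{3/2}$, so neither tool is enough on its own.

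To break the threshold I would apply Theorem \ref{mishailya8} using a sub-threshold energy bound of the shape $E(\cL) \ll |A|^{11/2 - \eta}$ with an explicit $\eta > 0$. A direct computation inside Theorem \ref{mishailya8} then yields $|S| \gg |A|^{3/2 + \eta/3}$, so the stated exponent $1/194$ corresponds to achieving $\eta = 3/194$. The required sub-threshold energy bound for $\cL$ was already asserted (without an explicit constant) by Rudnev and Shkredov; its proof runs exactly as for Theorem \ref{thm:RSEnergyBetter}, via a structural dichotomy in which either the trivial energy estimate can be strictly improved, or else one is in the critical regime. In the critical regime, Shkredov's lemma from \cite{Sh2} extracts a popular subset of $A$ with small additive doubling, and invoking Theorem \ref{thm:FSMTP} on this subset produces superquadratic growth of its triple product, which is incompatible with being in the critical regime, forcing the desired energy improvement.

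The hard part will be making every exponent in the dichotomy explicit so that the numerical value $\eta = 3/194$ drops out, rather than only the qualitative $\eta > 0$. This requires tracking the losses in Shkredov's critical-case step, optimising the threshold separating the two cases, and absorbing the polylogarithmic factors that are unavoidable in Theorem \ref{thm:FSMTP} (and are ultimately responsible for the $\lesssim$ in the theorem statement rather than $\ll$). Everything else in the argument is mechanical: once $\eta$ is quantified, Theorem \ref{mishailya8} is applied once to the incidence configuration above and rearranged.
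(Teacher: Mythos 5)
Your incidence setup is the same as the paper's (the point set $A \times (AA+A)$, the line set $\{(a,b):a,b\in A\}$, the lower bound $I \geq |A|^3$, and the observation that a saving of $\eta$ in the energy exponent converts to a gain of $\eta/3$ in the exponent of $|AA+A|$). But the proposal has a genuine gap at its core: the entire quantitative content is outsourced to an explicit sub-threshold energy bound $E(\cL) \ll |A|^{11/2-3/194}$ that you never prove and that does not appear in the paper. You propose to extract it by making the proof of Theorem \ref{thm:RSEnergyBetter} explicit, but that proof (as the paper itself notes) runs through growth estimates in the affine group and Shkredov's higher-energy machinery whose exponents are not tracked anywhere, and there is no reason the number $3/194$ would emerge from that route --- you have reverse-engineered $\eta$ from the answer rather than derived it. The sketch of the critical case is also off target: the obstruction to a sub-threshold bound on $E(\cL)$ for these lines is large \emph{multiplicative} energy of $A$ (via $E(\cL) \leq E_4^*(A)^{1/2}Q^{1/2} \lesssim |A|^4 E^*(A)^{1/2}$), whereas Theorem \ref{thm:FSMTP} takes small \emph{additive} doubling as its hypothesis, so it cannot be ``invoked on a subset with small additive doubling'' to rule out the critical regime for this energy.

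The paper avoids this difficulty entirely by running a dichotomy on $|AA+A|$ itself rather than on $E(\cL)$. Writing $E^*(A)=|A|^3/K$, the elementary chain $E(\cL) \leq E_4^*(A)^{1/2}Q^{1/2} \lesssim |A|^4E^*(A)^{1/2}$ (using $Q \ll |A|^6\log|A|$ and $E_4^*(A)\leq |A|^2E^*(A)$) fed into Theorem \ref{mishailya8} gives $|AA+A| \gg K^{1/6}|A|^{3/2}$, which is only useful when $K$ is large. When $K$ is small, the paper abandons the incidence argument altogether: Lemma \ref{ddecomp} extracts $A'\subseteq A$ with controlled $d_*(A')$, Theorem \ref{denergy} bounds $E^+(A')$ from above, and the Cauchy--Schwarz inequality $E^+(A') \geq |A'|^4/|A'+aA'|$ with $|A'+aA'|\leq |AA+A|$ yields $|AA+A| \gtrsim |A|^{20/13}/K^{14/13}$. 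The exponent $1/194$ is the result of balancing these two branches at $K=|A|^{3/97}$; it is not the shadow of any energy bound. To repair your argument you would need either to supply the explicit energy bound with a complete proof, or to add a second, non-incidence branch handling the case where $A$ has near-maximal multiplicative energy, which is exactly what the paper does.
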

The proof uses a combination of Theorem \ref{mishailya8} and the theory of the quantity $d_*(A)$, in addition to additive and multiplicative energies. The additive energy of a set $A$ is defined as follows.
$$E^+(A) := \left| \left\{(a,b,c,d) \in A^4 : a+b = c+d  \right\} \right|.$$
For an integer $k$, the $k$'th multiplicative energy of a set $A$ is defined as 
$$E_k^*(A) := \left| \left\{ (a_1,a_2,...,a_{2k}) \in A^{2k} : \frac{a_1}{a_2} = \frac{a_3}{a_4} = ... = \frac{a_{2k-1}}{a_{2k}}  \right\} \right|.$$ In the case $k=2$ this is simply called the multiplicative energy and denoted by $E^*(A)$. We define
$$d_*(A) := \min_{t > 0} \min_{Q \neq \emptyset, R \subset \R \setminus \{0\}} \frac{|Q|^2|R|^2}{|A|t^3}.$$ The following result is stated in \cite{Var2}, and proved in a different form in \cite{Sh2}.
\begin{Theorem}\label{denergy}
Let $A \subset \R$ be a finite set. Then we have
$$E^+(A) \lesssim d_*(A)^{7/13} |A|^{32/13}.$$
\end{Theorem}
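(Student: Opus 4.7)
The plan is to combine a dyadic decomposition of $E^+(A)$ with the multiplicative structure extracted via $d_*(A)$, reducing the bound to an incidence problem that can be handled by the Szemer\'edi--Trotter Theorem. In doing so, the $t^3$ that appears in the denominator of $d_*(A)$ will dictate the peculiar exponent $7/13$.

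First, I would write
\[
E^+(A) = \sum_{x} r_{A-A}(x)^2 \approx \sum_{\tau \text{ dyadic}} \tau^2 |D_\tau|, \qquad D_\tau := \{x : r_{A-A}(x) \in [\tau, 2\tau)\},
\]
and pigeonhole onto a single dominant dyadic value $\tau$, so that the task reduces to bounding $\tau^2 |D_\tau|$. By a Cauchy--Schwarz or Balog--Szemer\'edi--Gowers extraction one obtains a large subset $A' \subseteq A$ in which the popular differences lying in $D_\tau$ are realised $\gtrsim \tau |D_\tau|$ times by pairs from $A' \times A'$. These realisations are the ``additive data'' that will be fed into an incidence bound.

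Next, I would invoke the minimiser $(Q,R,t)$ of the $d_*$ functional to recast the additive data multiplicatively. In Shkredov's framework (the version of Theorem \ref{denergy} proved in \cite{Sh2}), the quantity $d_*(A)$ encodes an efficient parameterisation of $A$ by a $Q$-$R$ product structure with multiplicity roughly $t$: each $a \in A$ admits $\sim t$ representations through the chosen $(Q,R)$. Translating the $\tau|D_\tau|$ additive configurations through this parameterisation produces a weighted point--line incidence problem with $\lesssim |Q||R|$ points and $\lesssim |D_\tau|$ lines, each incidence picking up a $t$ factor. Applying \eqref{SzTr} yields an inequality of the schematic form
\[
t \cdot \tau |D_\tau| \;\lesssim\; (|Q||R|)^{2/3}|D_\tau|^{2/3} \;+\; |Q||R| \;+\; |D_\tau|,
\]
after which one uses $|Q|^2|R|^2 = |A|\,t^3\, d_*(A)$ to eliminate $Q,R,t$ in favour of $d_*(A)$.

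The final step is to optimise dyadically over $\tau$, balanced against the trivial bounds $\tau \leq |A|$ and $|D_\tau| \leq |A|^2$; the exponents $7/13$ and $32/13$ come out as the unique solution of the resulting linear programme, with the critical $t^3$ factor from the definition of $d_*(A)$ producing the denominator $13$. The principal obstacle is the middle step: one must encode the structure furnished by $d_*(A)$ as a genuine incidence configuration in which the multiplicity enters with exactly the right power of $t$. This is precisely where Shkredov's third-moment multiplicative energy machinery is needed, rather than a black-box BSG or Pl\"unnecke--Ruzsa argument; a naive lift would match only $t$ or $t^2$ and fall short of the exponent $7/13$.
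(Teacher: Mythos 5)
You should first note that the paper does not prove Theorem \ref{denergy} at all: it is quoted from \cite{Var2} and proved in a different form in \cite{Sh2}, so your sketch has to be judged on its own terms rather than against an internal argument. (Your reading of $d_*(A)$ is the intended one --- the minimum is over $Q$, $R$, $t$ such that every element of $A$ has at least $t$ representations $qr$ with $(q,r)\in Q\times R$; the displayed definition in the paper omits this covering condition.) Judged on its own, the sketch has a genuine gap at exactly the step you flag, and the issue is worse than ``hard to make precise'': the schematic inequality $t\tau|D_\tau| \lesssim (|Q||R|)^{2/3}|D_\tau|^{2/3} + |Q||R| + |D_\tau|$ cannot be true. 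In the regime where the first term dominates it gives $|D_\tau| \lesssim (|Q||R|)^{2}(t\tau)^{-3} = d_*(A)|A|\tau^{-3}$, hence $E^+(A) \approx \tau^2|D_\tau| \lesssim d_*(A)|A|/\tau$; since the popular dyadic level satisfies $\tau \gtrsim E^+(A)/|A|^2$, this would yield $E^+(A) \lesssim d_*(A)^{1/2}|A|^{3/2}$. That conclusion is false: one always has $E^+(A) \geq |A|^2$, while $d_*(A)=O(1)$ for a geometric progression (take $Q=A$, $R$ a shorter geometric progression, $t \approx |R|$). So no incidence configuration with $\lesssim |Q||R|$ points, $\lesssim |D_\tau|$ lines and a full factor of $t$ gained per incidence can exist; relatedly, the linear programme you appeal to does not output $7/13$ and $32/13$ from the displayed inequality --- its dominant regime outputs $1/2$ and $3/2$, and no regime gives the claimed exponents.

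For the record, the known route (and the reason the paper cites rather than proves the result) has a different structure. Szemer\'edi--Trotter, i.e.\ \eqref{SzTr}, enters essentially only to control a third-moment quantity, giving a bound of the shape $E_3^+(A) := \sum_x r_{A-A}(x)^3 \lesssim d_*(A)|A|^3$; the cheap return to the second moment via H\"older, $E^+(A) \leq |A|\,E_3^+(A)^{1/2}$, then yields only $E^+(A) \lesssim d_*(A)^{1/2}|A|^{5/2}$, which is weaker than Theorem \ref{denergy} since $d_*(A) \lesssim |A|$. The improvement from the exponent pair $(1/2,5/2)$ to $(7/13,32/13)$ is precisely the content of Shkredov's analysis of the critical case in which the second and third moment energies are simultaneously near-extremal (the same machinery the paper alludes to before Theorem \ref{thm:RSEnergyBetter}), and dyadic pigeonholing plus a direct application of \eqref{SzTr} to popular differences cannot substitute for it. A complete write-up would establish the $E_3^+$ bound from the $(Q,R,t)$ parameterisation via \eqref{SzTr} and then import the second-versus-third moment argument of \cite{Sh2} (or its derivation in \cite{Var2}).
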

A second result concerning $d_*(A)$ is the following decomposition type theorem, which can be viewed as a refinement of the Balog-Szemer\'{e}di-Gowers Theorem tailored towards a specific sum-product application. See \cite[Lemma 6.4]{Var2}.
\begin{Lemma}\label{ddecomp}
Let $A \subset \R$ be finite, and let $E^*(A) \gg \frac{|A|^3}{K}$. Then there exists $A' \subseteq A$ and a number $|A| \geq \Delta \gg |A|/K$ such that 
$$|A'| \gtrsim \frac{|A|^2}{K \Delta}, \qquad d_*(A') \lesssim \frac{K|A'|^2}{|A|\Delta}.$$
\end{Lemma}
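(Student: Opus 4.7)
The plan is to apply a dyadic pigeonhole to the hypothesis on $E^*(A)$ in order to extract a single popular level of the ratio function, and then to transfer that richness into a subset $A' \subseteq A$ with controlled multiplicative structure. First, write $E^*(A) = \sum_x r_{A/A}(x)^2 \gtrsim |A|^3/K$ and decompose over the dyadic level sets $P_\Delta := \{x \in A/A : \Delta \leq r_{A/A}(x) < 2\Delta\}$. A standard pigeonhole over the $O(\log|A|)$ non-trivial levels produces some $\Delta$ for which $P := P_\Delta$ satisfies $|P|\Delta^2 \gtrsim |A|^3/K$. Combined with the trivial inequality $|P|\Delta \leq \sum_x r_{A/A}(x) = |A|^2$, this forces $\Delta \gtrsim |A|/K$, placing $\Delta$ in the range asserted by the lemma.

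Next, I would transfer $P$ into $A$ by averaging over translates. Counting the pairs $(a_0, x) \in A \times P$ with $a_0 x \in A$ in two ways gives
\[
\sum_{a_0 \in A} |A \cap a_0 P| = \sum_{x \in P} r_{A/A}(x) \gtrsim |P|\Delta,
\]
so some $a_0 \in A$ realises $A' := A \cap a_0 P$ with $|A'| \gtrsim |P|\Delta/|A| \gtrsim |A|^2/(K\Delta)$, which gives the first assertion. By the very definition of $A'$, every $a \in A'$ satisfies $a/a_0 \in P$ and hence admits at least $\Delta$ representations of the form $a = a_0 \cdot b \cdot (b')^{-1}$ with $b, b' \in A$. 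Feeding the tuple $(Q, R, t) = (a_0 A, A^{-1}, \Delta)$ into the definition of $d_*$ then yields $d_*(A') \leq |A|^4/(|A'|\Delta^3)$, and the target form $d_*(A') \lesssim K|A'|^2/(|A|\Delta)$ should come out of a routine algebraic comparison using the lower bound on $|A'|$, after absorbing logarithmic factors.

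The main obstacle will be in this last step: the crude choice $Q = a_0 A$, $R = A^{-1}$ does not immediately match the claimed form uniformly across the full parameter range $|A|/K \lesssim \Delta \leq |A|$. A secondary refinement is likely required to shrink $Q$ and $R$ down to subsets proportional to the pairs $(b, b')$ that actually witness $a/a_0 \in P$ for $a \in A'$ — most naturally, by a second dyadic pigeonhole on the contribution of each ratio to $A'$. Bookkeeping this refinement, together with the $\log|A|$ losses from the initial dyadic pigeonhole and the averaging step, is the technical core, which is why the conclusion is stated up to $\lesssim$ rather than $\ll$.
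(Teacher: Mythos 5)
The paper itself offers no proof of this lemma: it is quoted verbatim from \cite[Lemma 6.4]{Var2}, so the comparison here is really with that source. On its own terms, the first half of your argument is correct and standard: the dyadic pigeonhole producing a level $\Delta$ with $|P|\Delta^2 \gtrsim |A|^3/K$, the deduction $\Delta \gg |A|/K$ from $|P|\Delta \leq |A|^2$, and the averaging over $a_0$ giving $|A'| = |A \cap a_0 P|$ with $|A'| \gtrsim |P|\Delta/|A| \gtrsim |A|^2/(K\Delta)$ are all fine (note in passing that the paper's displayed definition of $d_*$ omits the covering constraint $A' \subseteq \{x : r_{QR}(x) \geq t\}$; you have correctly supplied it implicitly).

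The second half is where the genuine gap lies, and it is larger than your closing paragraph suggests. Your covering $(Q,R,t) = (a_0A, A^{-1}, \Delta)$ gives $d_*(A') \leq |A|^4/(|A'|\Delta^3)$, and matching the target $K|A'|^2/(|A|\Delta)$ requires $|A'|^3 \gtrsim |A|^5/(K\Delta^2)$, i.e.\ $|A'| \gtrsim |A|^{5/3}K^{-1/3}\Delta^{-2/3}$. The bound you have, $|A'| \gtrsim |A|^2/(K\Delta)$, implies this only when $\Delta \lesssim |A|/K^2$, which is incompatible with the forced range $\Delta \gg |A|/K$. Concretely, in the critical regime $\Delta \approx |A|/K$, $|A'| \approx |A|$, the target is $\approx K^2$ while your covering gives $\approx K^3$: you are short by a full factor of $K$, which means $|Q||R|$ must be reduced by a factor of about $K^{1/2}$ while every $a \in A'$ retains $\approx \Delta$ representations in $Q\cdot R$. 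Your proposed fix --- a second dyadic pigeonhole on the contributions of individual ratios --- cannot deliver this: the $\approx |A'|\Delta \approx |A|^2/K$ witnessing pairs $(b,b')$ form a density-$1/K$ subset of $A \times A$ with no a priori rectangle structure, and extracting a smaller combinatorial rectangle $Q \times R$ that still covers $A'$ with high multiplicity is a Balog--Szemer\'{e}di--Gowers-type task; it is precisely the content of the lemma rather than a bookkeeping step. As written, your argument proves only the weaker bound $d_*(A') \lesssim K|A|^2/(|A'|\Delta)$, which (one can check) would visibly worsen the exponent in the $|AA+A|$ application where the lemma is used. You should consult the actual proof of \cite[Lemma 6.4]{Var2} for how the covering sets are chosen.
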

We are now ready to begin the proof.
\begin{proof}
It can be assumed without loss of generality that $0 \notin A$. Write $l_{a,b}$ for the line with equation $y=ax+b$ and define
\[
\cL=\{ l_{a,b} : a,b \in A\}
\]
We begin by bounding the energy of $\cL$. 
The following refinement of \eqref{RSEnergy} was given in \cite{RuSh}:
$$E(\cL) \leq E^*_4(A)^{1/2}Q^{1/2}.$$
Here $Q$ denotes the number of solutions to the equation
\[
\frac{a_1-a_2}{a_3-a_4}=\frac{a_5-a_6}{a_7-a_8}, \,\,\,\,\,\,\, a_i \in A.
\]
The bound $Q \ll |A|^6\log|A|$ was established in \cite{rectangles} (a simpler proof was later given in \cite{MRNS} - see also \cite{BW} for another presentation of this proof).

Using this result and the trivial bound $E^*_4(A) \leq |A|^2 E^*(A)$, $E(\cL)$ may then be bounded by
$$E(\cL) \lesssim |A|^4 E^*(A)^{1/2}.$$
We now use Theorem \ref{mishailya8}. Define $P = A \times (AA + A)$. Since for every $c \in A$ we have the incidence $(c,ac+b) \in l_{a,b}$, it follows that there are at least $|A|^3$ incidences. We bound the other side via Theorem \ref{mishailya8} as
$$|A|^3 \leq I(P,\cL) \ll |AA+A|^{1/2} |A|^2 E^*(A)^{1/12} + |AA+A|^{1/2}|A|^2.$$
Note that we may assume the leading term dominates, as otherwise we do better. We then have
$$|AA+A| \gg \frac{|A|^2}{E^*(A)^{1/6}}.$$
Now define $K$ via $E^*(A) = K^{-1}|A|^3$, so $1 \leq K \leq |A|$. The above inequality gives
\begin{equation}\label{AA+A8}|AA+A| \gg K^{1/6}|A|^{3/2}.\end{equation}
We shall now apply Lemma \ref{ddecomp} to $A$, to find a subset $A'$ with $|A'| \gtrsim \frac{|A|^2}{K \Delta}$ and with $d_*(A') \lesssim \frac{K|A'|^2}{|A|\Delta}$ for some $|A| \geq \Delta \gg |A|/K$. Applying Theorem \ref{denergy} gives
$$\frac{E^+(A')^{13/7}}{|A'|^{32/7}} \lesssim  d_*(A') \lesssim  \frac{K|A'|^2}{|A|\Delta} $$ and so
$$E^+(A') \lesssim \frac{K^{7/13}|A'|^{46/13}}{\Delta^{7/13}|A|^{7/13}}.$$
It follows from Cauchy-Schwarz that for any $a \in A'$ we have 
$E^+(A') \geq E^+(A',aA') \geq \frac{|A'|^4}{|A'+ aA'|}.$ Noting that $|A'+aA'| \leq |AA +A|$ then gives
$$|AA+A| \gtrsim \frac{|A'|^{6/13} |A|^{7/13} \Delta^{7/13}}{K^{7/13}} \gtrsim \frac{|A|^{19/13}\Delta^{1/13}}{K} \gg \frac{|A|^{20/13}}{K^{14/13}}.$$
Now note that estimate \eqref{AA+A8} improves as $K$ increases, and the estimate above improves as $K$ decreases. We therefore find the value of $K$ where the two estimates give the same value, as elsewhere we always have a better result. We have
$$\frac{|A|^{20/13}}{K^{14/13}} = K^{1/6}|A|^{3/2} \implies K = |A|^{3/97}.$$
Plugging this back into the estimates gives
$$|AA+A| \gtrsim |A|^{3/2 + 1/194}.$$
as required.
\end{proof}
\subsection{Another Three-Variable Expander}
A further application of the line energy method gives the following expander result.
\begin{Theorem} \label{thm:3var}
Let $A \subseteq \R$ be a finite set. Then we have
$$\left| \left\{ (a_1 - a_2)a_3 + a_1: a_1,a_2,a_3 \in A \right\} \right| \gg |A|^{5/3}.$$
\end{Theorem}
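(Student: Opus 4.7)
The plan is to cast the problem as a point-line incidence problem, apply Theorem~\ref{mishailya8} to the resulting configuration, and control the line energy by direct computation in the affine group. First I would rewrite the expression as $(a_1-a_2)a_3+a_1 = (a_3+1)a_1 - a_3 a_2$, which identifies $y$ as the value at $x=a_1$ of the line $\ell_{a_2,a_3}\colon y=(a_3+1)x-a_3 a_2$. Denoting the image by $E$ and the $|A|^2$-element line set by $\mathcal{L} = \{\ell_{a_2,a_3}: a_2,a_3\in A\}$, each triple $(a_1,a_2,a_3)\in A^3$ contributes a distinct incidence between $(a_1,(a_3+1)a_1-a_3 a_2)\in A\times E$ and $\ell_{a_2,a_3}$, so $I(A\times E,\mathcal{L})\ge|A|^3$.

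Applying Theorem~\ref{mishailya8} yields
\[
|A|^3 \ll |E|^{1/2}|A|^{4/3}E(\mathcal{L})^{1/6} + |E|^{1/2}|A|^2,
\]
and a short calculation shows that the bound $|E|\gg|A|^{5/3}$ reduces to proving $E(\mathcal{L})\ll|A|^5$. To estimate $E(\mathcal{L})$ I would follow the strategy used to prove Theorems~\ref{energythm1} and~\ref{energythm2}: compute $\ell_1^{-1}\ell_2=\ell_3^{-1}\ell_4$ explicitly for $\ell_i=(a_3^i+1,-a_3^i a_2^i)\in\text{Aff}(\mathbb{R})$. This yields a multiplicative relation $(a_3^2+1)/(a_3^1+1)=(a_3^4+1)/(a_3^3+1)$ on elements of $A+1$, together with a linear relation among the $(a_2^i)$ whose coefficients are determined by the $(a_3^i)$. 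The number of $(a_3^i)$-tuples satisfying the first equation is $E^*(A+1)$, and for each such tuple a Cauchy--Schwarz argument on representations of differences bounds the number of compatible $(a_2^i)$-tuples by the additive energy $E^+(A)$. This yields the clean estimate $E(\mathcal{L})\lesssim E^+(A)\cdot E^*(A+1)$.

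The main obstacle is then to show that $E^+(A)\cdot E^*(A+1)\ll|A|^5$, a genuine sum-product type inequality reflecting the heuristic that $A$ cannot simultaneously carry near-maximal additive and multiplicative structure after the shift $+1$. I would try to prove this through the $d_*(A)$-decomposition machinery already employed in the $|AA+A|$ proof (Theorem~\ref{denergy} and Lemma~\ref{ddecomp}), handling separately the structured regime where $E^+(A)$ is close to its maximum $|A|^3$ (in which case $A$ is approximately an arithmetic progression and $E^*(A+1)$ is forced to be much smaller than $|A|^3$ by Erd\H{o}s' multiplication table theorem) from the balanced regime. A back-up plan is to combine the energy method with the direct Elekes incidence bound $|E|\gg|A||A-A|^{1/2}$, which already gives $|E|\gg|A|^{5/3}$ whenever $|A-A|\gg|A|^{4/3}$, leaving only the complementary small-$|A-A|$ regime to be handled via the line energy argument.
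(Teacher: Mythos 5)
Your reduction is fine up to a point: the incidence setup with the $|A|^2$ lines $y=(a_3+1)x-a_3a_2$, the application of Theorem~\ref{mishailya8}, and the observation that everything comes down to proving $E(\mathcal{L})\ll|A|^5$ are all correct, as is the computation splitting the energy system into a multiplicative equation in the slope variables (giving $E^*(A+1)$) and, for each fixed slope quadruple, a linear equation in the $a_2$-variables with at most $E^+(A)$ solutions. The fatal step is the one you yourself flag as the ``main obstacle'': the inequality $E^+(A)\cdot E^*(A+1)\ll|A|^5$ is \emph{false}, so the reduction lands on an unprovable statement. Take $A=P\cup(G-1)$ with $P$ an arithmetic progression and $G$ a geometric progression, each of size $|A|/2$; then $E^+(A)\geq E^+(P)\gg|A|^3$ and $E^*(A+1)\geq E^*(G)\gg|A|^3$, so the product is $\gg|A|^6$ and the decoupled bound saves nothing over the trivial one. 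The heuristic you invoke --- that near-maximal $E^+(A)$ forces $A$ to resemble an arithmetic progression and hence forces $E^*(A+1)$ down --- is exactly what this example refutes: large additive energy only structures a large \emph{subset}, and the complement can carry all the multiplicative structure. The backup plan also does not close the gap: the claimed bound $|E|\gg|A||A-A|^{1/2}$ is unjustified, since the lines $y=(a_1-a_2)x+a_1$ number only $|A|^2$ (the intercept is tied to the slope), so the Elekes argument returns only the threshold $|E|\gg|A|^{3/2}$; and even granting it, the complementary regime would still require the broken energy estimate.

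The paper avoids this entirely by choosing the other parametrisation: it puts $a_1$ and $a_2$ into the line and $a_3$ into the point, writing $(a_1-a_2)a_3+a_1$ as the value at $x=a_3$ of the line $(a_1-a_2,\,a_1)\in\text{Aff}(\R)$. These lines are precisely the inverses of the lines treated in Theorem~\ref{energythm1} (with $\lambda=1$, $\mu=-1$), and since $E(L)=E(L^{-1})$, their energy is $\ll|A|^5$ directly from the collinear-quadruples bound --- no sum--product input about $A$ is needed, because that energy bound holds for \emph{arbitrary} $C,D$ of equal size. The lesson is that the two natural ways of splitting a three-variable expression into ``point'' and ``line'' variables can lead to line families of very different difficulty; here one choice lands in a family whose energy is already controlled unconditionally, while yours lands in a family whose energy genuinely depends on the arithmetic of $A$ and resists the decoupling you propose.
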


Note that this is better than the usual `threshold' estimate for three-variables that one obtains from a simple application of the Szemer\'{e}di-Trotter Theorem, where an exponent $3/2$ typically appears.

\begin{proof}
Theorem \ref{thm:3var} is a consequence of an energy bound on the set of lines 
\[
\cL:=\{(c-d,c) : (c,d) \in C \times D, c \neq d \}
\]
 for arbitrary finite sets $C,D \subset \R^*$. In fact, these lines are precisely the inverses of the lines given in Theorem \ref{energythm1}. It is now enough to see that for a set of (non-vertical, non-horizontal) lines $L$, we have $E(L) = E(L^{-1})$. Indeed, this is seen via
 $$l_1^{-1}l_2 = l_3^{-1}l_4 \iff l_2l_4^{-1} = l_1l_3^{-1}.$$
 Using Theorem \ref{energythm1}, we have the bound
\begin{equation} \label{energyunproven}
E(\cL) \ll  |C|^{5/2}|D|^{5/2} + |C|^4 + |D|^4.
\end{equation}
The lines $l_{c,d}\in \cL$ have the form $y = (c-d)x + c$. Let $A \subset \R$ be a finite set, and define
\[
S:=\{ (c-d)a + c : (a,c,d) \in A \times C \times D \}.
\]
Define the point set $P$ as $A \times S$. Since for each line $l_{c,d}$ in $\cL$ and each $a \in A$ we have the incidence $ (a,(c-d)a + c) \in l_{c,d}$, it follows that $I(P, \cL) \geq |A||C||D|$.
Using \eqref{energyunproven} and Theorem \ref{mishailya8} then gives
\begin{equation*}
|A||C||D| \leq I(P ,\cL) \ll |S|^{1/2}\bigg[ |A|^{2/3}  |C|^{3/4}|D|^{3/4}  + |A|^{2/3}|C||D|^{1/3} +  |A|^{2/3}|D||C|^{1/3} + |C||D| \bigg]. 
\end{equation*}
Taking $A = B = C$ ensures that the leading term dominates, completing the proof.
\end{proof}

 \section*{Acknowledgements}
The authors were partially supported by the Austrian Science Fund FWF Project P 30405-N32. We are very grateful to Mehdi Makhul for several helpful conversations, and in particular for offering important ideas towards the proof of Theorem \ref{expander}. We thank Brendan Murphy, Misha Rudnev and Ilya Shkredov for helpful discussions.

\end{document}